\documentclass[12pt,a4paper,reqno]{amsart}
\usepackage[T1]{fontenc}
\usepackage[utf8]{inputenc}
\usepackage{lmodern}
\usepackage{microtype}
\usepackage{amsmath,amssymb,amsthm,mathtools}
\usepackage{algorithm}
\usepackage{algpseudocode}
\usepackage[pdftex,pdfpagelabels]{hyperref}
\hypersetup{
  hidelinks,
  pdftitle={On Supersingular Isogeny Graphs of Drinfeld Modules},
  pdfauthor={Nikola Veselinov}
}
\usepackage[nameinlink,noabbrev]{cleveref}
\usepackage{xcolor}
\usepackage{enumitem}
\usepackage{booktabs}

\usepackage{comment}

\numberwithin{equation}{section}

\usepackage{aliascnt}

\newtheorem{theorem}{Theorem}[section]

\newaliascnt{proposition}{theorem}
\newtheorem{prop}[proposition]{Proposition}
\aliascntresetthe{proposition}

\newaliascnt{lemma}{theorem}
\newtheorem{lemma}[lemma]{Lemma}
\aliascntresetthe{lemma}

\newaliascnt{corollary}{theorem}

\aliascntresetthe{corollary}

\newaliascnt{example}{theorem}

\aliascntresetthe{example}

\newaliascnt{conjecture}{theorem}

\aliascntresetthe{conjecture}

\theoremstyle{definition}
\newaliascnt{defn}{theorem}
\newtheorem{defn}[defn]{Definition}
\crefname{corollary}{Corollary}{Corollaries}
\aliascntresetthe{defn}

\theoremstyle{remark}
\newaliascnt{remark}{theorem}
\newtheorem{remark}[remark]{Remark}
\aliascntresetthe{remark}

\newcommand\Z{\mathbb{Z}}
\newcommand\C{\mathbb{C}}
\newcommand\F{\mathbb{F}}

\newcommand\PP{\mathbb{P}}

\renewcommand\O{\mathcal{O}}

\newcommand{\1}{\mathbf{1}}

\newcommand{\pp}{\mathfrak{p}}
\renewcommand{\ss}{\mathfrak{s}}
\newcommand{\qq}{\mathfrak{q}}
\newcommand{\rr}{\mathfrak{r}}
\newcommand{\nn}{\mathfrak{N}}
\newcommand{\mm}{\mathfrak{m}}

\newcommand{\N}{\mathcal{N}}
\renewcommand{\P}{\mathcal{P}}

\newcommand{\ch}{\operatorname{char}}
\renewcommand{\hom}{\operatorname{Hom}}
\newcommand{\End}{\operatorname{End}}
\newcommand{\nrd}{\operatorname{Nrd}}
\newcommand{\cl}{\operatorname{Cl}}
\newcommand{\length}{\operatorname{length}}
\newcommand{\height}{\operatorname{ht}}
\newcommand{\gl}{\operatorname{GL}}
\newcommand{\mat}{\operatorname{Mat}}
\newcommand{\aut}{\operatorname{Aut}}
\newcommand{\gal}{\operatorname{Gal}}
\newcommand{\ov}{\overline}
\newcommand{\diag}{\operatorname{diag}}
\newcommand{\supp}{\operatorname{supp}}

\title{On Supersingular Isogeny Graphs of Drinfeld Modules}
\author{Nikola Veselinov}
\address{Independent researcher}
\email{nikola.veselinov.veselinov@gmail.com}
\subjclass[2020]{Primary 11G09; Secondary 05C25, 11R52}
\keywords{supersingular Drinfeld modules, isogeny graphs, Brandt matrices, Bruhat--Tits trees, quaternion algebras over function fields, completeness number, Drinfeld modular polynomials}
\date{August 2026}

\begin{document}

\begin{abstract}
    We study supersingular isogeny graphs of rank-two Drinfeld modules over $A=\F_q[T]$. For distinct finite primes $\pp$ and $\qq$ of $A$, we prove that the graph in characteristic $\pp$, with edges given by cyclic $\qq$-isogenies, is connected. The proof combines Gekeler's ideal-class correspondence with strong approximation to realize the graph as a quotient of the Bruhat--Tits tree of the homothety classes of $A_\qq$-lattices in $(\mathbb{F}_q(T))_\qq^2$. We also introduce the completeness number $E(\pp)$, the least integer such that the graph is complete for every prime $\qq\neq\pp$ with $\deg\qq\geq E(\pp)$, and derive explicit upper bounds using the Ramanujan--Petersson bound for the eigenvalues of operators associated to Brandt matrices over function fields. In particular, if $d=\deg\pp$, then $E(\pp)\leq 2d+2$, with sharper bounds depending on $q$ and the parity of $d$. This confirms a conjecture of Micheli and Papikian that the graph becomes complete once $\deg\qq$ is sufficiently large relative to $\deg\pp$. We further obtain parity-dependent lower bounds and prove that $E(\pp)\geq d+\log_q d-K$ for an absolute constant $K>0$ and sufficiently large odd $d$, thereby refuting the previously suggested bound $E(\pp)\leq d+1$. Finally, we prove a criterion yielding an algorithm to compute $E(\pp)$.
\end{abstract}

\maketitle

\section{Introduction}

The study of analogies between number fields and function fields is fundamental in number theory. A central example is the similarity between \emph{elliptic curves} and rank-two \emph{Drinfeld modules}.

Over finite fields, elliptic curves can be categorized into \emph{ordinary} and \emph{supersingular}, with supersingular elliptic curves having endomorphism rings that are maximal orders in a quaternion algebra. An \emph{isogeny} is a nonconstant morphism of elliptic curves that preserves the identity point. Fixing a prime $\ell$, one may consider the graph whose vertices are isomorphism classes of supersingular elliptic curves and whose edges correspond to cyclic $\ell$-isogenies, that is, isogenies with cyclic kernel of order $\ell$. These supersingular isogeny graphs were studied, in particular, by Kohel \cite{Kohel1996EndomorphismRO}.

Rank-two Drinfeld modules have corresponding notions of supersingularity and isogeny. Drinfeld modules were introduced by Drinfeld \cite{Drinfeld1974} in his proof of the Langlands conjecture for $\operatorname{GL}(2)$ over function fields. Drinfeld modular curves and Drinfeld modules over finite fields were subsequently studied by Gekeler \cite{Gekeler1986,Gekeler1983,GEKELE1991187}. In particular, supersingular rank-two Drinfeld modules also have endomorphism rings that are maximal orders in quaternion algebras. A general introduction to the theory is given by \cite{papikian_drinfeld_2023}.

Isogenies of Drinfeld modules have also been studied from a computational perspective. Modular polynomials describing cyclic isogenies were considered for Drinfeld modules of arbitrary rank by Breuer and R\"uck \cite{BREUER200959}, while algorithms for computing modular polynomials and isogenies in rank two were developed by Caranay, Greenberg, and Scheidler \cite{caranay-computing-mod-polynomials}.

In the present paper, we study the supersingular isogeny graphs of rank-two Drinfeld modules. For the polynomial ring $A\coloneq\F_q[T]$ in indeterminate $T$, let $\gamma\colon A\to k$ be an $\F_q$-algebra homomorphism to a finite field $k$ for a fixed prime power $q$. A rank-two Drinfeld $A$-module is defined by an additive polynomial
\[\phi_T=\gamma(T)+g\tau+\Delta\tau^2,\qquad \phi_T\in k\{\tau\},\]
where $g$ and $\Delta\neq 0$ are coefficients and $\tau\colon x\mapsto x^q$ denotes the Frobenius endomorphism. See \Cref{sec:prelims} for the complete definitions.

We work with supersingular Drinfeld modules in $A$-characteristic $\pp$, that is, $\pp=\ker(\gamma)$. If a Drinfeld $A$-module $\phi$ is supersingular, that is, its $\pp$-torsion is trivial, then by a theorem of Gekeler \cite[Theorem~4.3]{GEKELE1991187}, the endomorphism ring $\End(\phi)$ is a maximal $A$-order in the quaternion algebra $\End(\phi)\otimes_A\F_q(T)$ ramified precisely at $\pp$ and the infinite place.

It is known that any two supersingular Drinfeld modules of the same rank and characteristic are isogenous \cite[Lemma~4.4.3]{papikian_drinfeld_2023}. A natural question is whether they can be connected using only isogenies associated to a single fixed prime, denoted $\qq$. Specifically, we consider cyclic $\qq$-isogenies, that is, isogenies whose kernels are isomorphic to $A/\qq$. Here and throughout, we use the same symbol for a prime ideal and its generating monic polynomial, that is, $\pp=(\pp)$.

The supersingular $\qq$-isogeny graph $\Gamma_\pp(\qq)$ thus has as its vertices the isomorphism classes of supersingular rank-two Drinfeld $A$-modules in characteristic $\pp$, with edges given by cyclic $\qq$-isogenies. Our first result shows that this graph is always connected:

\begin{theorem}
\label{thm:connectedness}
    Let $A\coloneq\F_q[T]$ and let $\pp$ and $\qq$ be distinct finite primes of $A$. Then the graph $\Gamma_\pp(\qq)$ of supersingular rank-two Drinfeld $A$-modules in characteristic $\pp$, with edges given by cyclic $\qq$-isogenies, is connected.
\end{theorem}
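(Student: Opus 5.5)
The plan is to translate the graph into adelic language via Gekeler's ideal-class correspondence and then use strong approximation. Fix a supersingular Drinfeld module $\phi_0$ in characteristic $\pp$ and set $\O\coloneq\End(\phi_0)$. By Gekeler \cite[Theorem~4.3]{GEKELE1991187}, $\O$ is a maximal $A$-order in the quaternion algebra $D$ over $F\coloneq\F_q(T)$ ramified exactly at $\pp$ and $\infty$. The correspondence $\phi\mapsto\hom(\phi_0,\phi)$ identifies isomorphism classes of supersingular modules with left (or right) ideal classes of $\O$. In adelic form, these are the double cosets
\[
D^\times\backslash \widehat D^\times/\widehat\O^\times,
\]
where $\widehat D$ denotes the finite adeles of $D$ (all finite places, including $\pp$) and $\widehat\O=\prod_v \O_v$. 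The first step is to check that a cyclic $\qq$-isogeny $\phi\to\psi$ corresponds to an inclusion $I'\subset I$ of ideals with $I/I'\cong A/\qq$ as $A$-modules, which is a condition only at the place $\qq$. At $\qq$ we have $D_\qq\cong \mat_2(F_\qq)$ and $\O_\qq\cong\mat_2(A_\qq)$. So changing the local component at $\qq$ while fixing all others amounts to moving between homothety classes of $A_\qq$-lattices in $F_\qq^2$, and the cyclic $\qq$-isogenies are exactly the edges of the Bruhat--Tits tree $\mathcal T_\qq$.

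Next we realize $\Gamma_\pp(\qq)$ as a quotient of the tree. Let $G\coloneq D^\times\cap\bigl(D_\qq^\times\times\prod_{v\neq\qq}\O_v^\times\bigr)$, the unit group of the $A[1/\qq]$-order $\O[1/\qq]$. The key claim is that the natural map
\[
G\backslash D_\qq^\times/F_\qq^\times\O_\qq^\times \longrightarrow D^\times\backslash\widehat D^\times/\widehat\O^\times
\]
is surjective. Once this holds, every vertex of $\Gamma_\pp(\qq)$ is the image of a vertex of $\mathcal T_\qq$, and edges of $\mathcal T_\qq$ map to edges of $\Gamma_\pp(\qq)$. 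Since $\mathcal T_\qq$ is connected, the quotient graph is connected. We must also check that the edge sets match: the $q^{\deg\qq}+1$ neighbours of a lattice correspond to the $q^{\deg\qq}+1$ cyclic $\qq$-isogenies out of $\phi$, counted with multiplicity. Connectivity only needs the images of tree edges to be genuine edges, so this part is comparatively routine.

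The main obstacle is the surjectivity, and here we use strong approximation. The reduced norm $\nrd\colon\widehat D^\times\to\widehat F^\times$ has kernel $\widehat D^1$. Since $D$ splits at $\qq$ (because $\qq\neq\pp,\infty$), strong approximation for $D^1$ with respect to the place $\qq$ gives that $D^1$ is dense in $\widehat D^{1,(\qq)}$, the finite-adelic points away from $\qq$. This reduces surjectivity to a statement about norms: given $x\in\widehat D^\times$, we must find $\delta\in D^\times$ and $u\in\widehat\O^\times$ such that $\delta x u$ has trivial components away from $\qq$.

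For this, we need the map on norm classes
\[
F^\times_{>0}\backslash \widehat F^\times/\nrd(\widehat\O^\times)\,F_\qq^\times
\]
to be trivial. Here $\nrd(\widehat\O^\times)=\widehat A^\times$, since local norms from maximal orders are surjective onto local units, including at the ramified place $\pp$. Also, $\nrd(D^\times)$ consists of the elements of $F^\times$ that are positive at $\infty$ in the sense of Hasse--Schilling, meaning their degree is even. So the class group in question is a quotient of $\widehat F^\times/F^\times\widehat A^\times F_\qq^\times$, refined by the parity of degrees at $\infty$. Because $\cl(A)=1$, every idele is $F^\times\widehat A^\times$-equivalent to the trivial one, with the only possible residual obstruction coming from the degree parity. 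This parity is absorbed by $F_\qq^\times$ when $\deg\qq$ is odd. When $\deg\qq$ is even, I would instead compare with the fact that $\nrd(D^\times)$ already contains elements of every degree parity of the right type at the finite places; alternatively, I would bypass the issue by noting that homothety at $\qq$ lets us use $\qq$ itself and a uniformizer of $D_\pp$. I expect the careful verification of this norm bookkeeping at $\infty$ to be the delicate point. After that, a standard argument with $\delta_0\in D^\times$ correcting the norm, followed by strong approximation for $D^1$ to match the remaining components with $\widehat\O^{\times,(\qq)}$, completes the surjectivity and hence the connectedness.
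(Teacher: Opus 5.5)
Your overall route is the paper's: Gekeler's correspondence, identification of cyclic $\qq$-isogenies with adjacency in $\mathcal T_\qq$, and strong approximation for $D^1$ away from $\{\infty,\qq\}$ to show that every ideal class has a representative trivial at all finite $v\neq\qq$. There is, however, a gap in the norm bookkeeping, exactly where you hedge.

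Your claim that $\nrd(D^\times)$ consists of the elements of $F^\times$ of even degree (``positive at $\infty$'') is false. The Hasse--Schilling theorem imposes a sign condition only at \emph{real} ramified places, and a global function field has none. The place $\infty$ is non-archimedean, and the reduced norm of the local division algebra $D_\infty$ is surjective onto $F_\infty^\times$: a uniformizer of $D_\infty$ has reduced norm a uniformizer of $F_\infty$. Hence $\nrd(D^\times)=F^\times$. For example, when $d$ is odd, $\pi=\tau^d\in\End(\phi_1)$ has $\nrd(\pi)=-\pp$, of odd degree.

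Because you posit a parity obstruction, your argument closes only when $\deg\qq$ is odd. For $\deg\qq$ even you offer two alternatives, and neither works as written. The first asserts the correct fact, but it contradicts your earlier claim and is not justified. The second (using a uniformizer of $D_\pp$) fails because that element is neither global nor in $\widehat\O^\times$, so it cannot adjust a double coset. As it stands, surjectivity is not established for $\deg\qq$ even.

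The fix is the paper's argument and needs no case distinction. Given $x\in\widehat D^\times$, pick $a\in F^\times$ with $v(a)=v(\nrd(x_v))$ for every finite $v\neq\qq$; this is possible because $A[1/\qq]$ is a PID. Choose $\delta_0\in D^\times$ with $\nrd(\delta_0)=a^{-1}$, which exists since $\nrd(D^\times)=F^\times$. Then $\nrd(\delta_0x_v)\in A_v^\times$ for all finite $v\neq\qq$. Surjectivity of $\nrd$ on $\O_v^\times$, followed by strong approximation for $D^1$, finishes as you intended.

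Two smaller corrections:
\begin{enumerate}
\item For $\qq$-neighboring $\O$-ideals one has $I/I'\cong(A/\qq)^2$, equivalently $\nrd(I')\nrd(I)^{-1}=\qq$, not $I/I'\cong A/\qq$. The latter can never hold, since quotients of $\O_\qq\cong\mat_2(A_\qq)$-modules have even $A_\qq$-length. The index-$|\qq|$ condition belongs to the associated lattices in $F_\qq^2$.
\item The direction actually needed for connectedness is that a $\qq$-neighbor pair yields a cyclic $\qq$-isogeny. The paper proves this by factoring $u_J=v\circ u_I$ and observing that $\nn(v)=\qq$ forces $v$ to be separable with kernel $A/\qq$.
\end{enumerate}
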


It was conjectured by Micheli and Papikian \cite[Remark~4.2]{micheli2026stabilizationisogenyspacessupersingular} that supersingular isogeny graphs become \emph{complete} (including self-loops) once $\deg\qq$ is large enough relative to $\deg\pp$. Their question arose from the study of rank-metric codes constructed from spaces of morphisms between supersingular Drinfeld modules. More precisely, they studied the \emph{Brandt matrix} $B(\qq)=(b_{i,j}(\qq))$, whose entries $b_{i,j}(\qq)$ count the cyclic $\qq$-isogenies (up to equivalence) between isomorphism classes of supersingular Drinfeld modules.

Accordingly, we introduce the \emph{completeness number} $E(\pp)$ of $\pp$, defined as $$E(\pp)\coloneq\inf\left\{e\in\Z_{\geq1}\colon B(\qq)\text{ has no zero entries for every prime $\qq\neq\pp$ with $\deg\qq\geq e$}\right\},$$ that is, the lowest degree of $\qq$ for which $\Gamma_\pp(\qq)$ is always complete, with $E(\pp)=\infty$ if such a degree does not exist. In the following theorem, we prove that $E(\pp)$ is finite for every $\pp$ and give an explicit upper bound; in particular, this proves the conjecture.

\begin{theorem}
\label{thm:upper-bound-intro}
    Denote $d\coloneq\deg\pp$. If $d=1$ or $d=2$, then $E(\pp)=1$. Suppose that $d\geq3$ and define
    \[C_\pp\coloneq
    \begin{cases}
    \displaystyle
    \frac{q^d-q}{q-1},
    &d\text{ odd},\\[10pt]
    \displaystyle
    \frac{q^d-q^2}{q^2-1},
    &d\text{ even}.
    \end{cases}\]
    Then
    $$E(\pp)\leq\left\lfloor2\log_q\left(C_\pp+\sqrt{C_\pp^2-1}\right)\right\rfloor+1.$$
    In particular, $E(\pp)\leq2d+2$.
\end{theorem}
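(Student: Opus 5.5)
The plan is the standard spectral argument for Brandt matrices, transported to $A=\F_q[T]$. Throughout, write $|\qq|\coloneq q^{\deg\qq}$.

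\textbf{Small cases.} If $d=1$ or $d=2$, the class number of the maximal order in the quaternion algebra ramified at $\pp$ and $\infty$ is $1$ by Gekeler's mass formula. So there is a single supersingular isomorphism class, and $B(\qq)$ is the $1\times1$ matrix $(|\qq|+1)$ up to the automorphism normalisation. This has no zero entry for every $\qq\neq\pp$, hence $E(\pp)=1$.

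\textbf{Spectral setup for $d\ge3$.}
\begin{enumerate}[label=(\roman*)]
\item Let $\phi_1,\dots,\phi_h$ represent the supersingular classes, and put $w_i\coloneq\#\aut(\phi_i)/(q-1)$.
\item Using Gekeler's ideal-class correspondence, I will show that $B(\qq)$ is self-adjoint for the weighted inner product $\langle f,g\rangle=\sum_i w_i f_i\ov{g_i}$. The needed identity is $w_j b_{i,j}(\qq)=w_i b_{j,i}(\qq)$, which comes from dual isogenies.
\item The Hecke relations show that the matrices $B(\qq)$ commute for different $\qq$. Hence there is a common $\langle\cdot,\cdot\rangle$-orthonormal eigenbasis $f^{(0)},\dots,f^{(h-1)}$.
\item Row sums of $B(\qq)$ equal $|\qq|+1$, the number of cyclic $\qq$-submodules, i.e.\ $|\PP^1(A/\qq)|$. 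So $f^{(0)}=M^{-1/2}\1$ is an eigenvector with eigenvalue $|\qq|+1$, where $M=\sum_i w_i$ is the mass.
\item Expanding in the eigenbasis gives
\[
b_{i,j}(\qq)=w_j\Bigl(\frac{|\qq|+1}{M}+\sum_{k\ge1}\lambda_k(\qq)\,f^{(k)}_i\ov{f^{(k)}_j}\Bigr).
\]
\end{enumerate}

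\textbf{Main obstacle: bounding the non-Eisenstein eigenvalues.} Via Jacquet--Langlands (Drinfeld's correspondence), the eigenvectors $f^{(k)}$ with $k\ge1$ correspond to cuspidal automorphic forms on $\gl_2$ of level $\pp$ that are special at $\infty$. The Ramanujan--Petersson bound, proved by Drinfeld for function fields, then gives $|\lambda_k(\qq)|\le 2|\qq|^{1/2}$.

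The delicate point is which eigenvectors are non-cuspidal. Besides the constant vector, there may be a one-dimensional character twist, coming from the degree/parity character of the idèle class group, with eigenvalue $\pm(|\qq|+1)$ depending on $\deg\qq$. Handling this is what I expect to produce the parity distinction in $C_\pp$ and the different masses $M$ for $d$ odd and even. I would first try to show that such a twist is absent, or that it only modifies the main term by a fixed factor; this is why $C_\pp$ has denominator $q-1$ versus $q^2-1$.

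By Cauchy--Schwarz and orthonormality, $\sum_{k\ge1}|f^{(k)}_i|^2\le 1/w_i$. Therefore
\[
\frac{b_{i,j}}{w_j}\;\ge\;\frac{|\qq|+1}{M'}-\frac{2|\qq|^{1/2}}{\sqrt{w_iw_j}}\;\ge\;\frac{|\qq|+1}{M'}-2|\qq|^{1/2},
\]
since $w_i,w_j\ge1$. Here $M'$ is the effective mass, and the task is to identify it with $C_\pp$ via Gekeler's mass formula, using the orbit structure of $\aut$ (which is $\F_{q^2}^\times$ or $\F_q^\times$).

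\textbf{Concluding the bound.} Positivity of $b_{i,j}(\qq)$ follows once $x^2-2C_\pp x+1>0$ with $x=|\qq|^{1/2}$. This holds for $x>C_\pp+\sqrt{C_\pp^2-1}$, that is, for
\[
\deg\qq>2\log_q\bigl(C_\pp+\sqrt{C_\pp^2-1}\bigr),
\]
which gives the stated floor-plus-one bound. Finally, $C_\pp+\sqrt{C_\pp^2-1}<2C_\pp\le 2q^{d}/(q-1)\le q^{d+1}$ gives $E(\pp)\le 2d+2$.
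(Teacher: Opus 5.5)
Your overall strategy is the paper's: a self-adjoint Brandt operator, the constant vector with eigenvalue $|\qq|+1$, and Drinfeld's Ramanujan--Petersson bound on its orthogonal complement. Your treatment of $d\le 2$ and of the final estimate $2C_\pp\le q^{d+1}$ is fine. The gap is the step that actually produces $C_\pp$. You leave it as ``identify the effective mass with $C_\pp$'', and the setup you wrote cannot deliver it.

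First, your weights are inverted. With $B(\qq)$ acting on column vectors, so that the row sums make $\1$ an eigenvector, self-adjointness for $\sum_i w_if_i\ov{g_i}$ would need $w_ib_{i,j}=w_jb_{j,i}$. Dual isogenies give the opposite relation, $w_jb_{i,j}=w_ib_{j,i}$. The right inner product is $\sum_i f_i\ov{g_i}/w_i$. Then $\|\1\|^2=M=\sum_i1/w_i=(q^d-1)/(q^2-1)$ is the true mass; your $\sum_i w_i$ is not. With these weights an orthonormal eigenbasis satisfies $\sum_{k\ge0}|f^{(k)}_i|^2=w_i$, and
\[
w_jb_{i,j}(\qq)=\frac{|\qq|+1}{M}+\sum_{k\ge1}\lambda_k(\qq)f^{(k)}_i\ov{f^{(k)}_j},\qquad\sum_{k\ge1}|f^{(k)}_i|^2=w_i-\frac1M .
\]
Cauchy--Schwarz then gives $w_jb_{i,j}(\qq)\ge M^{-1}\bigl(|\qq|+1-2|\qq|^{1/2}\sqrt{(w_iM-1)(w_jM-1)}\bigr)$, and $\sqrt{(w_iM-1)(w_jM-1)}\le w_{\max}M-1=C_\pp$.

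So the factor $w_{\max}$ comes from $\sqrt{w_iw_j}$ sitting in the numerator of the error term. Your bound $1/\sqrt{w_iw_j}\le1$ points the wrong way once the weights are corrected. The $-1$ comes exactly from removing the $f^{(0)}$-component, which your estimate $\sum_{k\ge1}|f^{(k)}_i|^2\le1/w_i$ discards. Without the $-1$ you get the threshold with $w_{\max}M$ in place of $C_\pp$. That is strictly weaker after taking floors in some cases: for $q=2$, $d=4$ it gives $7$ instead of the stated $6$.

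Second, the parity dependence has nothing to do with a character twist; it comes from the automorphism weights. We have $w_i\in\{1,q+1\}$, and the unique class of weight $q+1$ (namely $j=0$) is supersingular exactly when $d$ is odd. So $w_{\max}$ is $q+1$ or $1$, giving $C_\pp=(q+1)M-1=(q^d-q)/(q-1)$ or $C_\pp=M-1=(q^d-q^2)/(q^2-1)$. The denominator $q-1$ is just $(q+1)/(q^2-1)=1/(q-1)$.

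There is also no exceptional eigenvalue $\pm(|\qq|+1)$ to handle. A one-dimensional eigenvector $\chi\circ\nrd$ on $\cl(\O)$ needs $\chi$ trivial on $\nrd(D^\times)=F^\times$ and on $\prod_vA_v^\times$, hence trivial since $A$ is a PID. The bound $|\lambda|\le2|\qq|^{1/2}$ therefore holds on all of $(\C\1)^\perp$.

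As written, both this point and the identification of the constant are left open, so the proposal does not yet prove the stated bound. Separately, simultaneous diagonalisation over all $\qq$ is unnecessary; the spectral theorem for the single operator $T_\qq$ suffices.
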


Micheli and Papikian further suggested that all entries of $B(\qq)$ become positive whenever $\deg\qq>\deg\pp$. The computations in \Cref{app:completeness-algorithm} provide counterexamples to this sharper threshold. We also prove the following lower bounds on $E(\pp)$:

\begin{theorem}
\label{thm:lower-bound}
    Denote $d\coloneq\deg\pp$. If $d\geq 6$ is even, then $E(\pp)\geq d-1$. If $d\geq 3$ is odd, then $E(\pp)\geq d$. Moreover, there exists an absolute constant $K>0$ such that for all sufficiently large odd $d$, we have $E(\pp)\geq d+\log_qd-K$.
\end{theorem}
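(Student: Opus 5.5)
The plan is to exhibit, for each $\pp$, a prime $\qq\neq\pp$ of degree just below the claimed bound together with a zero entry $b_{i,j}(\qq)=0$. Through Gekeler's ideal-class correspondence, $b_{i,j}(\qq)$ counts, up to units, the elements of reduced norm $\qq$ in the connecting ideal $I_{ij}=I_i^{-1}I_j$ of the maximal orders $\End(\phi_i)$, suitably normalized by $\nrd(I_{ij})$. So $b_{i,j}(\qq)=0$ exactly when the rank-four quadratic form $x\mapsto\nrd(x)/\nrd(I_{ij})$ on $I_{ij}$ does not represent $\qq$. All three statements should follow from choosing a connecting ideal whose norm form has very unbalanced successive minima, so that in degrees below roughly $d$ only a binary subform can contribute.

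\emph{Step 1 (odd $d$, $E(\pp)\ge d$).} When $d$ is odd, $\pp$ is inert in the constant extension $\F_{q^2}(T)$. Hence there is a supersingular vertex $\phi_0$ (the $j=0$ module) with $\F_{q^2}[T]\subseteq\End(\phi_0)$. I would write $\End(\phi_0)=\O_K+\O_K\,\jmath$ (up to a small index adjustment, to be checked), with $\O_K=\F_{q^2}[T]$ and $\jmath^2=c\,\pp$ for a constant $c$ chosen so that the algebra ramifies at $\pp$ and $\infty$. This gives
\[
\nrd(a+b\jmath)=N(a)-c\,\pp N(b).
\]
Here $N(a)$ has even degree and $\pp N(b)$ has odd degree, so no cancellation of leading terms can occur. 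Thus any element with $b\neq0$ has norm of degree $\ge d$, and in degrees $<d$ only norms from $\O_K$ occur. Loops at $\phi_0$ do not immediately give a zero, because primes of even degree split in $\F_{q^2}(T)$. I would therefore pass to an off-diagonal entry $b_{0,j}$. For this I need a left $\End(\phi_0)$-ideal $I$ of the form $\mathfrak a+\mathfrak b\jmath$ with $\mathfrak a\subseteq\O_K$ a non-principal-like lattice; since $\O_K$ is a PID, I would instead twist by an element of degree about $d$. Then I need to check that the normalized norm form in degree $d-1$ only represents polynomials with a forced factor or a forced leading coefficient. This excludes some prime $\qq$ of degree $d-1$.

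\emph{Step 2 (even $d$, $E(\pp)\ge d-1$).} For even $d$ the constant extension splits at $\pp$. I would replace $\F_{q^2}[T]$ by an imaginary quadratic order $A[\sqrt{D}]$ with $\deg D$ odd and small in which $\pp$ is inert; for example $D=\varepsilon T$ with $\varepsilon$ chosen so that $\pp$ is inert. The same orthogonal decomposition $\nrd(a+b\jmath)=N(a)-u\,\pp N(b)$ then holds. The degree parity argument now fails, so I would use a degree-$d$ bound instead. As a warm-up sanity check, the regularity count is useful: row sums equal $|\qq|+1$, while the number of vertices $h$ is of size about $q^{d-2}$ by Gekeler's mass formula, so the graph cannot be complete when $q^{e}+1<h$. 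This alone gives $E(\pp)\ge d-2$, and the parity/orthogonal-splitting trick should gain the remaining step.

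\emph{Step 3 (the $\log_q d$ term), the main obstacle.} For large odd $d$ I would keep the ideal from Step 1, whose elements of norm degree $<d$ lie in a rank-two sublattice. On that sublattice the norm form restricts to a binary form of discriminant of degree about $d$. I would choose $I$ so that this binary form is not principal, so its class group has size about $q^{d/2}$. By the Chebotarev/Hecke equidistribution of primes among form classes, with explicit error terms from the Riemann hypothesis for curves, it represents only about a $1/h$ fraction of primes of each degree. More robustly, I would just count: the elements of the sublattice with norm degree $e$ number about $q^{e}/q^{\text{const}}$ modulo units, while there are about $q^e/e$ primes of degree $e$. The hard part is making this count beat the prime count by exactly a factor $d$. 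I would try choosing the binary form with large covolume $\approx q^{d}$ relative to the full rank-four lattice, so that representations in degree $e$ number about $q^{e-d}\cdot q^{e}$ up to constants. For $e<d+\log_q d-K$ this is smaller than the number $\asymp q^e/e$ of primes of degree $e$, so by pigeonhole some prime is missed. The delicate point is guaranteeing that the other two basis directions of $I$ contribute nothing in degree $e$ up to about $d+\log_q d$. That requires their minima to exceed $d+\log_q d$, and I expect constructing such an ideal, while controlling unit multiplicities, to be the crux.
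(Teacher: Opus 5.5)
\textbf{Step 3.} This step has a genuine gap, and the repair you propose cannot exist.

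First, the pigeonhole inequality runs the wrong way. Your own count of about $q^{e-d}\cdot q^{e}=q^{2e-d}$ representations in degree $e$ is smaller than the $\asymp q^e/e$ primes of degree $e$ only when $e<d-\log_q e$, not when $e<d+\log_q d-K$. For $e>d$ there are more representations than primes, so counting alone can never exhibit a missed prime in that range.

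Second, the rank-two sublattice you want is impossible. Take any left $\O$-ideal $I$ and let $m_1\le m_2\le m_3\le m_4$ be the successive minima of the normalized form $\nrd(x)/\nrd(I)$, i.e.\ the degrees along a reduced basis for the $\infty$-adic norm. They are nonnegative and sum to $2d$, the degree of the discriminant $\pp^2$; for example they are $(0,0,d,d)$ for $A'\oplus A'\pi$. Hence $m_3\le d$, so the vectors of normalized norm degree at most $d$ already span a rank-three sublattice. No binary form, principal or not, governs $b_{i,j}(\qq)$ once $\deg\qq\ge d$, and equidistribution among form classes does not apply.

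What is missing is a \emph{local} obstruction, which is how the paper argues.
\begin{itemize}
\item It works at the $j=0$ vertex, where $\End(\phi_1)=A'\oplus A'\pi$ and $\nrd(a+b\pi)=N(a)-\pp N(b)$. This is the diagonal entry you set aside.
\item For $\deg\qq=d+2h$, parity forces $\deg b=h$. So $b_{1,1}(\qq)>0$ iff $\qq-\pp c$ is a norm from $A'$ for some $c$ in $\N_{2h}$, a set of size $\ll q^{2h}/\sqrt h$.
\item By the Chinese remainder theorem one builds a class $a_0\bmod M$ with $\deg M\ll q^{2h}$ such that, for every $f\equiv a_0$ and every $c\in\N_{2h}$, $f-\pp c$ has valuation exactly one at some odd-degree prime, so it is not a norm.
\item To build it, random unit classes modulo $\rr^2$ for odd-degree $\rr\neq\pp$ with $\deg\rr<2h$ leave only $\ll q^{2h}/h$ surviving values of $c$. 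This uses Gorodetsky's bound on $\#\N_{2m}$ and a Mertens-type estimate. One fresh prime $\ss_c$ per survivor then handles the rest.
\item Primes in arithmetic progressions with the Riemann-hypothesis error term give a prime $\qq\equiv a_0\pmod M$ of degree $d+2h$ as long as $\deg M\le d/4$. This permits $2h\approx\log_q d-K_0$.
\end{itemize}

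\textbf{Steps 1 and 2.} These are also unfinished: the twisted off-diagonal ideal in Step 1 is never constructed, and the last unit of improvement in Step 2 is left to an unspecified trick. No quaternion arithmetic is needed here, though. Your row-sum observation suffices once it is done exactly, and this is what the paper does.

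For even $d$, the vertex count is exactly $n=1+q^2+\cdots+q^{d-2}$, and $q^{d-2}+1<n$ for $d\ge 6$. So any prime of degree $d-2$ already yields a zero entry, giving $E(\pp)\ge d-1$ rather than just $d-2$.

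For odd $d$, combine the row sum with the symmetry $w_jb_{1,j}(\qq)=w_1b_{j,1}(\qq)$ at the weight-$(q+1)$ vertex. This makes every nonzero off-diagonal $b_{1,j}(\qq)$ a multiple of $q+1$. Now $n-1=q+q^3+\cdots+q^{d-2}$, so completeness would force the first row sum to be at least $1+(q+1)(n-1)=1+q+\cdots+q^{d-1}$. That exceeds $q^{d-1}+1$ when $\deg\qq=d-1$, a contradiction.
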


The paper is structured as follows. In \Cref{sec:prelims}, we introduce the standard notions needed for the proofs of the main results and recall Gekeler's correspondence. In \Cref{sec:connectedness}, we apply Gekeler's correspondence and strong approximation to establish key properties of $\Gamma_\pp(\qq)$ and prove that it is connected. In \Cref{sec:upper-bound}, we recall the Brandt matrix, formally define the completeness number and prove the upper bound on $E(\pp)$. In \Cref{sec:lower-bound}, we prove the lower bounds on $E(\pp)$.
In \Cref{sec:modular-polynomials}, we establish a criterion for completeness based on modular polynomials. In \Cref{app:completeness-algorithm}, we establish a finite algorithm that computes $E(\pp)$ for a given finite prime $\pp$ of $A=\F_q[T]$.

\section{Preliminaries}
\label{sec:prelims}
\subsection{Drinfeld modules, torsion and isogenies}

Fix a prime power $q$. Denote the ring of polynomials in indeterminate $T$ by $A\coloneq\mathbb{F}_q[T]$ and write $F\coloneq\mathbb{F}_q(T)$ for its fraction field. Given a nonzero ideal $\mathfrak n\subseteq A$, we denote by the same symbol the unique monic generator of $\mathfrak n$, that is, $\mathfrak n=(\mathfrak n)$. The \emph{primes} of $A$ are the maximal ideals of $A$; given a prime $\pp$, denote the residue field at $\pp$ by $\F_\pp\coloneq A/\pp$.

Consider a finite field $k$ such that there exists an $\F_q$-algebra homomorphism $\gamma:A\to k$. The \emph{$A$-characteristic} of $k$ is $\ch_A(k)\coloneq\ker(\gamma)$; it is a prime of $A$, which we will denote by $\pp\coloneq\ch_A(k)$ throughout.

A \emph{Drinfeld module} is an action of $A$ on the additive group by additive polynomials. More precisely, if we write $\tau$ for the Frobenius endomorphism $x\mapsto x^q$, then the \emph{twisted polynomial ring} $k\{\tau\}$ is defined by the commutation rule $\tau c=c^q\tau$ for $c\in k$. An element $u=u_0+u_1\tau+\cdots+u_m\tau^m$ (here and throughout, $m$ is called the \emph{degree} of $u$, denoted $\deg(u)$; the lowest $i$ such that $u_i\neq0$ is called the \emph{height} of $u$, denoted $\height(u)$) acts as the additive polynomial
$$x\longrightarrow \operatorname{ev}_xu = u_0x+u_1x^q+\cdots+u_mx^{q^m},$$
defining a ring isomorphism between $k\{\tau\}$ and the ring $k\langle x\rangle$ of $\F_q$-linear polynomials, where multiplication on $k\langle x\rangle$ is defined by composition.

\begin{defn}[Drinfeld module]
\label{defn:drinfeld}
    A \emph{Drinfeld module} of rank $r\geq 1$ over an $A$-field $k$ is an $\F_q$-algebra homomorphism
\begin{align*}
    \phi:A&\longrightarrow k\{\tau\},\\
    a&\longrightarrow\phi_a=\gamma(a)+g_1(a)\tau+\cdots+g_n(a)\tau^n,
\end{align*}
such that for each $a\neq 0$ we have $\deg(a)r=n$ and the coefficient $g_n(a)$ is nonzero.
\end{defn}

\begin{remark}
    Note that $\phi$ is uniquely determined by $\phi_T$, so in order to define a Drinfeld module one may simply choose $g_1,\ldots,g_r\in k$ with $g_r\neq 0$ and set $\phi_T=\gamma(T)+g_1\tau+\cdots+g_r\tau^r$.
\end{remark}

\begin{defn}[Torsion]
\label{defn:torsion}
    Let $\phi$ be a Drinfeld module over a finite $A$-field $k$. If $\mathfrak a\subset A$ is an ideal, define the \emph{$\mathfrak a$-torsion} as
    \[\phi[\mathfrak a]\coloneq\{x\in\bar k\colon\phi_a(x)=0\text{ for every }a\in\mathfrak a\}.\]
    For a principal ideal $(a)$ we write $\phi[a]$ for the kernel of the additive polynomial $\phi_a$.
\end{defn}

A rank-two Drinfeld module $\phi$ of $A$-characteristic $\pp$ is called \emph{supersingular} if its $\pp$-torsion is trivial, that is, if $\phi[\pp]={0}$. Next, we formally define the key maps between Drinfeld modules.

\begin{defn}[Maps between Drinfeld modules]
\label{defn:morphism-group}
    Let $\phi$ and $\psi$ be Drinfeld modules over a finite $A$-field $k$. A \emph{morphism} $u\colon\phi\to\psi$ is an element $u\in k\{\tau\}$ such that $u\phi_a=\psi_au$ for every $a\in A$; equivalently, $u\phi_T=\psi_Tu$. A nonzero morphism $u\colon\phi\to\psi$ is called an \emph{isogeny}. An isogeny is an \emph{isomorphism} if it is invertible in $k$, that is, $u\in k^\times$. The \emph{group of morphisms} between $\phi$ and $\psi$ is $\hom_k(\phi,\psi)$, and the \emph{endomorphism ring} of $\phi$ is $\End_k(\phi)\coloneq\hom_k(\phi,\phi)$. We write $\hom(\phi,\psi)\coloneq\hom_{\bar k}(\phi,\psi)$ for the group of all morphisms $\phi\to\psi$ over the algebraic closure $\bar k$ of $k$. For an isogeny $u\in\hom(\phi,\psi)$, denote by $\ker(u)\coloneq\{\alpha\in\bar k\colon u(\alpha)=0\}$ the set of roots of the polynomial $u(x)\in\bar k\langle x\rangle$. For a finite prime $\qq\neq\ch_A(k)=\pp$, we call $u$ a \emph{cyclic $\qq$-isogeny} if $\ker(u)\cong A/\qq$ as an $A$-module.
\end{defn}


\begin{remark}
    Given $u\in\hom(\phi,\psi)$ and $a\in A$, denote $a\circ u\coloneq u\phi_a=\psi_au$. We have that $a\circ u\in\hom(\phi,\psi)$, so $\hom(\phi,\psi)$ is an $A$-module. The condition $\ker(u)\cong A/\qq$ for $u$ to be a cyclic $\qq$-isogeny is equivalent to $\ker(u)$ being a one-dimensional $A/\qq$-subspace of $\phi[\qq]\cong(A/\qq)^2$.
\end{remark}

For a rank-two Drinfeld module $\phi_T=\gamma(T)+g\tau+\Delta\tau^2$, its \emph{$j$-invariant} is
\[j(\phi)\coloneq\dfrac{g^{q+1}}{\Delta}.\]
It is invariant under isomorphism and, over an algebraically closed field, two rank-two Drinfeld modules are isomorphic if and only if they have the same $j$-invariant.

\subsection{Gekeler's Correspondence}
\label{sec:ideal-class-correspondence}
In the theory of elliptic curves, the \emph{Deuring correspondence} gives a bijection between left ideal classes and isomorphism classes of supersingular elliptic curves \cite{Deuring1941DieTD}. An analogous result was proven by Gekeler \cite{GEKELE1991187}, which we will throughout call \emph{Gekeler's correspondence}.

Let $\phi_0$ be a supersingular rank-two Drinfeld module in characteristic $\pp$. Denote $\End^0(\phi)\coloneq\End(\phi)\otimes_AF$, $\O\coloneq\End(\phi_0)$, and $D\coloneq\End^0(\phi_0)$. By \cite[Theorem 4.3]{GEKELE1991187}, $D$ is the quaternion division algebra over $F$ ramified precisely at $\pp$ and $\infty$, and $\O$ is a maximal $A$-order in $D$.

\begin{defn}[Fractional left ideal, Left ideal class set]
\label{defn:full-left-ideal}
    A \emph{fractional left $\O$-ideal} is a finitely generated $A$-submodule $I\subset D$ such that $\O I\subset I$ and $FI=D$. It is \emph{integral} if $I\subset\O$. Two left ideals $I$ and $J$ are said to be \emph{equivalent} if $J=Ix$ for some $x\in D^\times$. The equivalence class of $I$ is denoted by $[I]$. The set of equivalence classes of fractional left $\O$-ideals is denoted by $\cl(\O)$ and is called the \emph{left ideal class set} of $\O$.
\end{defn}

\begin{defn}[Kernel ideal of an isogeny, equivalent isogenies]
\label{defn:kernel-ideal}
    Let $u\colon\phi_0\to\phi$ be an isogeny. Define the \emph{kernel ideal} of $u$ as $$I(u)\coloneq\hom(\phi,\phi_0)\circ u=\{v\circ u:v\in\hom(\phi,\phi_0)\}\subset\O.$$ Two isogenies are called \emph{equivalent} if they share the same kernel.
\end{defn}

\begin{theorem}[Gekeler's correspondence {\cite[Theorem 4.3(ii)]{GEKELE1991187}}]
\label{thm:gekeler-correspondence}
    The assignment $\phi\longmapsto[I(u)]$, where $u\colon\phi_0\to\phi$ is any isogeny, gives a bijection between the isomorphism classes of supersingular rank-two Drinfeld modules in characteristic $\pp$ and the left ideal classes in $\cl(\O)$.
\end{theorem}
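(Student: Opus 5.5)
The plan is to follow the classical Deuring argument, as Gekeler does. There are four steps: show that $I(u)$ is a fractional left $\O$-ideal; show that its class is independent of the choice of $u$ and of $\phi$ within its isomorphism class; show the map is injective by recovering $\phi$ from $I(u)$ via kernels; and show it is surjective by building, for each integral ideal $I$, a quotient module $\phi_0/\phi_0[I]$ with kernel ideal $I$.

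\emph{Well-definedness.} Since $\End(\phi_0)\circ\hom(\phi,\phi_0)\subset\hom(\phi,\phi_0)$, the set $I(u)$ is a left $\O$-submodule of $\O$. It is finitely generated because $\hom(\phi,\phi_0)$ is a projective $A$-module of rank $4$. For the condition $FI(u)=D$, use a dual isogeny $\hat u\colon\phi\to\phi_0$ with $u\hat u=\phi_a$ and $\hat u u=(\phi_0)_a$ for some $0\ne a\in A$: then $\O\hat u\subset\hom(\phi,\phi_0)$ gives $\O\cdot a\subset I(u)$. Now let $u'\colon\phi_0\to\phi$ be a second isogeny and put $x\coloneq a^{-1}\hat u u'\in D^\times$. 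Then $u x=u'$ in $\hom^0(\phi_0,\phi)$, so
\[
I(u')=\hom(\phi,\phi_0)\,u'=\hom(\phi,\phi_0)\,u\,x=I(u)x .
\]
Replacing $\phi$ by an isomorphic $c\phi c^{-1}$ with $c\in\bar k^\times$ replaces $\hom(\phi,\phi_0)$ by $\hom(\phi,\phi_0)c^{-1}$ and $u$ by $cu$, so $I(u)$ does not change. Hence $[I(u)]$ depends only on the isomorphism class of $\phi$. Every supersingular $\phi$ admits some isogeny $u$ by \cite[Lemma~4.4.3]{papikian_drinfeld_2023}.

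\emph{Injectivity.} The key claim is that $\ker(u)=\phi_0[I(u)]$, where $\phi_0[I]$ denotes the common kernel (as a finite $A$-group scheme, to handle the inseparable part) of all $v\in I$. Together with the fact that an isogeny is determined up to isomorphism of its target by its kernel, this identifies $\phi$ from $I(u)$.
\begin{itemize}
\item The inclusion $\supseteq$ is trivial.
\item For the reverse inclusion, it suffices to show that the elements of $\hom(\phi,\phi_0)$ have no common nonzero kernel.
\begin{itemize}
\item At primes $\ell\ne\pp$ I would use Tate's isomorphism $\hom(\phi,\phi_0)\otimes A_\ell\cong\hom_{A_\ell}(T_\ell\phi,T_\ell\phi_0)$, valid after passing to a finite field of definition over which all endomorphisms are defined.
\item At $\pp$, the connected part is controlled by powers of $\tau$.
\end{itemize}
\end{itemize}
If $I(u')=I(u)x$, first multiply $x$ by a suitable $a\in A$ to reduce to $x\in\O$. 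Then $\phi_0[I(u)x]$ relates the kernels of $u$ and $u'$ through the isogeny $x$, which produces an isomorphism $\phi\cong\phi'$.

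\emph{Surjectivity.} Every class contains an integral ideal $I$. Set $G=\phi_0[I]$, let $u\colon\phi_0\to\phi\coloneq\phi_0/G$ be the quotient isogeny, and check $I(u)=I$ locally at every prime.
\begin{itemize}
\item At $\ell\ne\pp$, the completions $I_\ell$ and $I(u)_\ell$ are left ideals of $\O_\ell\cong\mat_2(A_\ell)$. They are determined by the lattice $\{t\in V_\ell\phi_0: I_\ell t\subset T_\ell\phi_0\}$, which equals $u^{-1}(T_\ell\phi)$ in both cases; this again uses Tate's theorem.
\item At $\pp$, $D_\pp$ is a division algebra, so $\O_\pp$ is its unique maximal order and its left ideals are powers of the maximal ideal $\mathfrak P$. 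These should correspond to kernels of powers of Frobenius $\tau^{dk}$ with $d=\deg\pp$, since $\phi_0$ is supersingular and $\phi_0[\pp]$ is connected.
\end{itemize}

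The main obstacle is these local comparisons, and in particular the step $\hom(\phi,\phi_0)u\otimes A_\ell=I_\ell$, which needs the full Tate isomorphism for Drinfeld modules over finite fields, together with the matching of $\mathfrak P$-power ideals with Frobenius kernels at $\pp$. I would first establish both over a sufficiently large finite field $k$, where all endomorphisms are defined, and then deduce the statement over $\bar k$.
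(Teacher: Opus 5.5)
The paper does not reprove this theorem; it cites Gekeler. The mechanism it quotes in the proof of \Cref{prop:isogeny-neighbor-correspondence} is the same as yours. Your quotient $\phi_0/\phi_0[I]$ is the target of the monic generator $u_I$ of $\bar k\{\tau\}I$, and your key claim $\ker(u)=\phi_0[I(u)]$ is the assertion that the isogeny attached to $I(u)$ is $u$ itself.

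Well-definedness, the $\ell\neq\pp$ comparisons and surjectivity go through. At $\pp$, surjectivity is even automatic: $I\subseteq I(u_I)$, and every element of $I(u_I)$ has height at least $\height(u_I)=\min_{x\in I}\height(x)$. Since $\height(x)=d\,v_\pp(\nrd x)$ on $\O$ (with $d=\deg\pp$), and left ideals of $\O_\pp$ are determined by the minimum of $v_\pp\circ\nrd$, this forces $I(u_I)_\pp=I_\pp$.

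\textbf{The gap.} It is the $\pp$-part of your key claim, on which injectivity rests. Saying the connected part is ``controlled by powers of $\tau$'' only tells you that the common connected kernel of $\hom(\phi,\phi_0)$ is $\ker\tau^m$. You need $m=0$, i.e.\ that \emph{some} isogeny $\phi\to\phi_0$ is separable, and nothing in your sketch gives this.
\begin{itemize}
\item Tate modules at $\ell\neq\pp$ cannot supply it: $\tau^d\colon\phi\to\phi^{(q^d)}$ (coefficients raised to the $q^d$-th power) induces isomorphisms on every $T_\ell$.
\item It is exactly what injectivity needs. If every element of $\hom(\phi,\phi_0)$ had height at least $d$, then $\hom(\phi,\phi_0)=\hom(\phi^{(q^d)},\phi_0)\tau^d$. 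Hence $I(\tau^d u)=I(u)$, although $\phi\not\cong\phi^{(q^d)}$ whenever $j(\phi)\notin\F_\pp$.
\end{itemize}
So the real obstacle is this $\pp$-adic point, not the $\ell$-adic comparisons you single out.

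\textbf{A way to close it.} Heights of isogenies are multiples of $d$, and $\phi_\pp=c\tau^{2d}$ gives $\phi^{(q^{2d})}=c^{-1}\phi c\cong\phi$.
\begin{enumerate}
\item Write $h(\psi)$ for the least height in $\hom(\psi,\phi_0)\smallsetminus\{0\}$. If $h(\psi)\geq d$, then $h(\psi^{(q^d)})=h(\psi)-d$. Applying this to both $\phi$ and $\phi^{(q^d)}$ would contradict $h(\phi^{(q^{2d})})=h(\phi)$. So $\phi$ or $\phi^{(q^d)}$ admits a separable isogeny to $\phi_0$.
\item To pass from $\phi^{(q^d)}$ back to $\phi$, choose $x\in\End(\phi)$ with $v_\pp(\nrd x)$ odd. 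To get one, approximate a uniformizer of the division algebra $D_\pp$ by an element of $D$ and clear denominators by some $a\in A$; this changes $v_\pp\circ\nrd$ by the even number $2v_\pp(a)$.
\item Then $\height(x)\equiv d\pmod{2d}$. Hence $x=y\tau^{\height(x)}$ with $y$ a separable isogeny $\phi^{(q^{\height(x)})}\to\phi$, and $\phi^{(q^{\height(x)})}\cong\phi^{(q^d)}$.
\item Take $b\in A$ prime to $\pp$ annihilating $\ker y$; it exists because $\ker y$ is a finite $A$-module without $\pp$-torsion. Then $\phi^{(q^d)}_b=zy$ with $z\colon\phi\to\phi^{(q^d)}$ separable.
\item Composing $z$ with a separable isogeny $\phi^{(q^d)}\to\phi_0$ when needed, $\phi$ always admits a separable isogeny to $\phi_0$.
\end{enumerate}
Combined with your $\ell$-adic argument, this gives $\ker u=\phi_0[I(u)]$ and hence injectivity. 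Alternatively, invoke a $\pp$-adic Tate theorem for the height-two formal $A_\pp$-modules, which are all isomorphic over $\bar k$.
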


The quaternion algebra $D$ is equipped with a multiplicative map $\nrd\colon D^\times\to F^\times$ known as the \emph{reduced norm}. Note that if $D_v\cong M_2(F_v)$, then the reduced norm is the determinant. For a fractional left $\O$-ideal $I$, define the fractional ideal generated by reduced norms of the elements of $I$ as $$\nrd(I)\coloneq\sum_{x\in I}A\,\nrd(x)\subset F.$$

\begin{defn}[{Norm of an isogeny \cite[\S3.9]{GEKELE1991187}}]
\label{defn:norm-of-isogeny}
    We define the \emph{norm} of an isogeny $u\colon\phi\to\psi$ of Drinfeld $A$-modules of the same rank as $\nn(u)\coloneq\pp^{\height(u)/\deg\pp}\chi(\ker(u))$, where $\chi(M)$ is the \emph{Euler-Poincar\'e characteristic} of a finite $A$-module $M$, that is, an ideal of $A$ determined by
\begin{enumerate}[label=(\roman*)]
    \item $\chi(M)=\rr$, if $M\cong A/\rr$ for a prime ideal $\rr$ of $A$;
    \item If $0\to M_1\to M\to M_2\to0$ is exact, then $\chi(M)=\chi(M_1)\chi(M_2)$.
\end{enumerate}
\end{defn}

\begin{remark}
    If $u\in\End(\phi)$, then $u$ may also be viewed as an element of the quaternion algebra $\End^0(\phi)$, and $\nn(u)=(\nrd(u))$.
\end{remark}

\section{Connectedness via strong approximation}
\label{sec:connectedness}
Under Gekeler's correspondence, the vertices of $\Gamma_\pp(\qq)$ correspond to left ideal classes of $\O$. We first show that cyclic $\qq$-isogenies correspond to $\qq$-neighbor relations between these classes. Locally at $\qq$, the neighbor criterion is adjacency in the Bruhat--Tits tree, which will be introduced in the following subsection. Strong approximation then proves that the ideal-class graph is a quotient of the Bruhat--Tits tree and therefore connected. The corresponding connections among supersingular elliptic-curve isogeny graphs and Bruhat--Tits trees are surveyed in \cite{AmorosEtAl2021}.

Let $v$ be a finite prime of $A$. Denote the completion of $F$ at $v$ by $F_v$ and its valuation ring by $A_v$. Define $D_v\coloneq D\otimes_FF_v$ and $\O_v\coloneq\O\otimes_AA_v$. If $I$ is a fractional left $\O$-ideal, denote its localization at $v$ by $I_v\coloneq I\otimes_AA_v$. Since $\O$ is maximal, every $I_v$ is principal, so there exists some $x_v\in D_v^\times$ such that $I_v=\O_vx_v$.

\subsection{Cyclic isogenies and the Bruhat--Tits tree}
\begin{defn}[$\qq$-neighbor]
\label{defn:q-neighbor}
    Let $I$ and $J$ be fractional left $\O$-ideals. We call $J$ a \emph{$\qq$-neighbor} of $I$ if representatives of their ideal classes may be chosen such that $\qq I\subsetneq J\subsetneq I$ and $I/J\cong(A/\qq)^2$ as $A$-modules.
\end{defn}

\begin{lemma}
\label{lem:nrd-q-neighbor-condition}
    Suppose that $J\subseteq I$ are fractional left $\O$-ideals. Then $J$ is a $\qq$-neighbor of $I$ if and only if $\nrd(J)\nrd(I)^{-1}=\qq$.
\end{lemma}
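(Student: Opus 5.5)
The plan is to reduce both conditions to local statements and then read them off from a Smith normal form computation at $\qq$. Both sides of the equivalence are local in nature. Since $J\subseteq I$, the quotient $I/J$ is a finite torsion $A$-module and decomposes as $I/J\cong\bigoplus_v I_v/J_v$ over finite primes $v$. Similarly, a fractional ideal of $A$ is determined by its localizations, and $\nrd(I)A_v=\nrd(I_v)$, where $\nrd(I_v)$ is the $A_v$-module generated by reduced norms of elements of $I_v$. So I will first check that the global reduced norm ideal localizes correctly, then compute $\nrd(I_v)$. Write $I_v=\O_v y_v$; then $\nrd(I_v)=\nrd(y_v)\nrd(\O_v)=\nrd(y_v)A_v$, because $\nrd(\O_v)=A_v$ ($\O_v$ contains $1$ and its norms lie in $A_v$). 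Since $J_v\subseteq I_v$, we may write $J_v=\O_v x_v y_v$ with $x_v\in\O_v$. Hence the $v$-part of $\nrd(J)\nrd(I)^{-1}$ is $\nrd(x_v)A_v$.

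\textbf{Primes $v\neq\qq$.} The claim is that $\nrd(x_v)\in A_v^\times$ if and only if $J_v=I_v$, which in turn holds if and only if $x_v\in\O_v^\times$.
\begin{itemize}
\item At split $v\neq\pp$ we have $\O_v\cong M_2(A_v)$, so this is the statement that $x\in M_2(A_v)$ with unit determinant lies in $\gl_2(A_v)$.
\item At $v=\pp$, $D_\pp$ is a division algebra and $\O_\pp$ is its unique maximal order, the valuation ring of $\nrd$. An element of $\O_\pp$ of unit norm is then a unit.
\end{itemize}
Thus the condition $\nrd(J)\nrd(I)^{-1}=\qq$ forces $I_v=J_v$ for all $v\neq\qq$. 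Conversely, the condition $I/J\cong(A/\qq)^2$ forces $I_v/J_v=0$ for $v\neq\qq$, hence $\nrd(x_v)$ is a unit there.

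\textbf{The prime $\qq$.} Here $\O_\qq\cong M_2(A_\qq)$. I would use the Smith normal form to write $x_\qq=g\,\diag(\qq^a,\qq^b)\,h$ with $g,h\in\gl_2(A_\qq)$ and $a,b\geq0$, so that $\nrd(x_\qq)A_\qq=\qq^{a+b}A_\qq$. Right multiplication by $y_\qq$ is an isomorphism, so $I_\qq/J_\qq\cong M_2(A_\qq)/M_2(A_\qq)x_\qq$. Decomposing $M_2(A_\qq)$ into its two rows, this quotient is $(A_\qq^2/A_\qq^2x_\qq)^2\cong(A/\qq^a\oplus A/\qq^b)^2$. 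This is isomorphic to $(A/\qq)^2$ exactly when $\{a,b\}=\{0,1\}$, i.e. exactly when $a+b=1$, which is the condition $\nrd(x_\qq)A_\qq=\qq A_\qq$.

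\textbf{Conclusion.} In that case $\qq\,\diag(1,1)\in M_2(A_\qq)\diag(\qq,1)$, so $\qq I_\qq\subseteq J_\qq$. Since $I_v=J_v$ away from $\qq$, we get $\qq I\subseteq J$. Both inclusions in $\qq I\subsetneq J\subsetneq I$ are strict, since the index $I/J\cong(A/\qq)^2$ is nontrivial and strictly smaller than $I/\qq I\cong(A/\qq)^4$. Assembling the local conclusions yields both directions.

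The main obstacle I expect is the bookkeeping in the first step. One must justify that $\nrd(I)$ is determined locally, with $\nrd(I)A_v=\nrd(I_v)$; this follows because $I$ is a lattice and the norm form takes values generating the same ideal after completion. One must also justify that the unit-norm-implies-unit argument at the ramified prime $\pp$ uses the uniqueness of the maximal order in the local division algebra. The $\qq$-local computation itself is routine via the elementary divisor theorem.
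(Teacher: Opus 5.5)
Your proposal is correct and takes essentially the same route as the paper: localize, use that an element of $\O_v$ with unit reduced norm is a unit to get $J_v=I_v$ for $v\neq\qq$, and read off $a+b=1$ from the Smith normal form $\diag(\varpi^a,\varpi^b)$ at $\qq$. The differences are minor. You identify $I_\qq/J_\qq\cong(A/\qq^a\oplus A/\qq^b)^2$ directly rather than combining $a,b\in\{0,1\}$ with a length count. You also spell out the unit argument at the ramified prime $\pp$ and the global inclusion $\qq I\subseteq J$, both of which the paper leaves implicit.
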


\begin{proof}
For each finite prime $v$, write $I_v=\O_vx_v$ and $J_v=\O_vy_vx_v$ with $x_v\in D_v^\times$ and $y_v\in\O_v$. Suppose first that $\nrd(J)\nrd(I)^{-1}=\qq$. If $v\neq\qq$, then $\nrd(y_v)\in A_v^\times$, so $y_v\in\O_v^\times$ and $J_v=I_v$.

At $\qq$, set $W\coloneq A_{\qq}$ and let $\varpi$ be a uniformizer. Since $D_{\qq}\cong M_2(F_{\qq})$, identify $\O_{\qq}$ with $M_2(W)$. Smith normal form gives $J_{\qq}=M_2(W)\diag(\varpi^a,\varpi^b)x_\qq$ for some $a,b\geq0$. The relative reduced norm gives $a+b=1$, so $\{a,b\}=\{0,1\}$. Hence $\qq I_\qq\subsetneq J_\qq\subsetneq I_\qq$ and $I_\qq/J_\qq\cong(A/\qq)^2$. Since $J_v=I_v$ for $v\neq\qq$, it follows that $J$ is a $\qq$-neighbor of $I$.

Conversely, suppose that $\qq I_\qq\subsetneq J_\qq\subsetneq I_\qq$ and $I/J\cong(A/\qq)^2$. Then $J_v=I_v$ for $v\neq\qq$. At $\qq$, Smith normal form gives $J_\qq=M_2(W)\diag(\varpi^a,\varpi^b)x_\qq$ with $a,b\in\{0,1\}$. Since $I/J\cong(A/\qq)^2$, localization at $\qq$ gives $\length_W(I_{\qq}/J_{\qq})=2$ (here $\length_W$ denotes module length over the discrete valuation ring $W$, normalized by $\length_W(W/\varpi W)=1$). Moreover, $I_{\qq}/J_{\qq}\cong(W/\varpi^aW)^2\oplus(W/\varpi^bW)^2$, so $\length_W(I_{\qq}/J_{\qq})=2a+2b$; hence, $a+b=1$. Therefore, $\nrd(J)\nrd(I)^{-1}=\qq$.
\end{proof}

\begin{prop}
\label{prop:isogeny-neighbor-correspondence}
    Under Gekeler's correspondence, two left ideal classes are $\qq$-neighbors if and only if the corresponding supersingular isomorphism classes of Drinfeld modules are connected by a cyclic $\qq$-isogeny.
\end{prop}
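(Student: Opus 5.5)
We plan to use Gekeler's correspondence together with the norm of an isogeny. Fix $\phi_0$ and for each supersingular $\phi$ choose an isogeny $u\colon\phi_0\to\phi$, so that $\phi$ corresponds to $[I(u)]$. The key compatibility to establish first is functoriality of kernel ideals under composition: if $w\colon\phi\to\psi$ is an isogeny, then $I(w\circ u)\subseteq I(u)$. Moreover, we want $\nrd(I(w\circ u))=\nrd(I(u))\,\nn(w)$. The inclusion is clear, since any $v\circ w\circ u$ with $v\in\hom(\psi,\phi_0)$ has the form $v'\circ u$ with $v'=v\circ w\in\hom(\phi,\phi_0)$. For the norm identity we would use Gekeler's results in \cite[\S3]{GEKELE1991187}, which say that $\nrd(I(u))=\nn(u)$ and that $\nn$ is multiplicative (via the exact sequence $0\to\ker u\to\ker(wu)\to\ker w\to0$ and additivity of heights). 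Alternatively, we would check the identity locally, using that $\hom(\phi,\phi_0)$ is a locally free rank-one right module and that $I(u)$ is the associated left ideal.

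\emph{Forward direction.} Suppose $w\colon\phi\to\psi$ is a cyclic $\qq$-isogeny. Then $\height(w)=0$ because $\qq\neq\pp$, and $\ker w\cong A/\qq$, so $\nn(w)=\qq$. Set $I=I(u)$ and $J=I(w\circ u)$. Then $J\subseteq I$ and $\nrd(J)\nrd(I)^{-1}=\qq$. By \Cref{lem:nrd-q-neighbor-condition}, $J$ is a $\qq$-neighbor of $I$. Since $[J]$ corresponds to $\psi$ by \Cref{thm:gekeler-correspondence} (as $w\circ u$ is an isogeny from $\phi_0$ to $\psi$), the classes are $\qq$-neighbors.

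\emph{Converse.} Suppose $[I]\leftrightarrow\phi$ and $[J']\leftrightarrow\psi$ are $\qq$-neighbors, with representatives $\qq I\subsetneq J\subsetneq I$ and $J\sim J'$. We may take $I=I(u)$. The task is to realize $J$ as $I(w\circ u)$ for a cyclic $\qq$-isogeny $w$ out of $\phi$. Since $\qq\subseteq A$ acts by $\phi_\qq$, we have $\qq I=I(\phi_\qq\circ u)$, and $I/\qq I$ is identified, via $I\cong\hom(\phi,\phi_0)$, with a module dual to $\phi[\qq]$. Concretely, we would take the finite subgroup
\[
G=\{x\in\phi[\qq]\colon v(x)=0\text{ for all }v\circ u\in J\}.
\]
The $\qq$-neighbor condition ($I_\qq/J_\qq$ of length $2$, i.e.\ half of $I_\qq/\qq I_\qq\cong M_2(\F_\qq)$) should force $G$ to be a line in $\phi[\qq]\cong(A/\qq)^2$. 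We then let $w\colon\phi\to\phi/G$ be the isogeny with kernel $G$ (which exists since $G$ is an $A$-submodule and $\F_q$-linear), which is cyclic. By the forward direction, $I(w\circ u)$ is a $\qq$-neighbor of $I$ contained in $J$'s annihilator description. Comparing norms with \Cref{lem:nrd-q-neighbor-condition} then gives $I(w\circ u)=J$. Hence $\phi/G$ corresponds to $[J]=[J']$, i.e.\ $\phi/G\cong\psi$ by the bijectivity in \Cref{thm:gekeler-correspondence}.

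The main obstacle is this converse: showing that sub-ideals of $I$ of index $\qq^2$ containing $\qq I$ correspond exactly to lines in $\phi[\qq]$, i.e.\ that $J\mapsto G$ is a bijection onto the $q^{\deg\qq}+1$ lines. I would first count both sides. Left $M_2(\F_\qq)$-submodules of dimension $2$ in $M_2(\F_\qq)$ number $|\PP^1(\F_\qq)|$, as do the lines. I would then verify injectivity of $G\mapsto I(w_G\circ u)$, using that an isogeny is determined up to isomorphism of its target by its kernel and that $I(w\circ u)$ recovers $\ker(w)$ as a common kernel. Together these give the bijection.
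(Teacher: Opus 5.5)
Your proposal is correct. The forward direction is the paper's: $J=I(w\circ u)\subseteq I(u)$, $\nrd(J)\nrd(I)^{-1}=\nn(w)=\qq$, then \Cref{lem:nrd-q-neighbor-condition}. Your converse, however, takes a genuinely different route.

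\textbf{The paper's converse.} It uses the inverse of Gekeler's map. For an integral ideal $K$ it takes the monic generator $u_K$ of $\bar k\{\tau\}K$ and right-divides $u_J=v\circ u_I$ in $\bar k\{\tau\}$. It then reads off $\nn(v)=\nrd(J)\nrd(I)^{-1}=\qq$, so $v$ itself is the cyclic $\qq$-isogeny.

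\textbf{Your converse.} You fix $I=I(u)$ and map each line $G\subseteq\phi[\qq]$ to $I(w_G\circ u)$. There are $q^{\deg\qq}+1$ lines and $q^{\deg\qq}+1$ two-dimensional left ideals of $I/\qq I\cong M_2(\F_\qq)$, so injectivity makes the map a bijection. Hence the given $J$ equals some $I(w_G\circ u)$, and bijectivity in \Cref{thm:gekeler-correspondence} gives $\phi/G\cong\psi$.

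\textbf{Simplifications.} Your intermediate claim that the common kernel of $J$ ``should'' be a line is unnecessary once you have the bijection. Injectivity is also easier than you indicate. Right division by the separable $w_G$ shows
\[
I(w_G\circ u)=\{v\circ u\colon v\in\hom(\phi,\phi_0),\ v|_G=0\}.
\]
If two distinct lines gave the same $J$, every such $v$ would kill $\phi[\qq]$ and hence factor through $\phi_\qq$. That gives $J\subseteq\qq I$, contrary to $\qq I\subsetneq J$.

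\textbf{What each route buys.} The paper's route is shorter, but it relies on the explicit construction $K\mapsto u_K$ from Gekeler's proof and only gives existence of an isogeny. Yours needs only the statement of the correspondence plus $\nrd(I(u))=\nn(u)$. It also produces a finer, multiplicity-preserving bijection between cyclic $\qq$-subgroups of $\phi[\qq]$ and $\qq$-neighbor sub-ideals of $I$. That is what one actually wants when matching edges of $\mathcal T_\qq$ with Brandt matrix entries.
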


\begin{proof}
For an integral left $\O$-ideal $K$, let $u_K$ denote the monic generator of the principal left ideal $\bar{k}\{\tau\}K$. Then $u_K$ is an isogeny, its target corresponds to $[K]$, and $\nn(u_K)=\nrd(K)$. Conversely, for an isogeny $w\colon\phi_0\to\eta$, the isogeny associated to $I(w)$ is $w$ up to an isomorphism of its target \cite[proof of Theorem~4.3] {GEKELE1991187}.

Let $v\colon\phi\to\psi$ be a cyclic $\qq$-isogeny and choose an isogeny $u\colon\phi_0\to\phi$. Set $I\coloneq I(u)$ and $J\coloneq I(v\circ u)$. Then
$$J=\hom(\psi,\phi_0)\circ v\circ u\subseteq\hom(\phi,\phi_0)\circ u=I.$$
Since $\qq\neq\pp$, the isogeny $v$ is separable, and therefore $\nn(v)=\chi(\ker v)=\qq$. Hence
$$\nrd(J)\nrd(I)^{-1}=\nn(v\circ u)\nn(u)^{-1}=\qq.$$
By \Cref{lem:nrd-q-neighbor-condition}, the classes of $I$ and $J$ are $\qq$-neighbors.

Conversely, suppose that representatives $J\subseteq I$ are $\qq$-neighbors. After multiplying both ideals by the same nonzero element of $A$, we may assume that they are integral. Since $\bar k\{\tau\}J\subseteq\bar k\{\tau\}I$, there exists a unique monic $v\in\bar k\{\tau\}$ such that $u_J=v\circ u_I$. The defining relations for $u_I$ and $u_J$ imply that $v$ is an isogeny between their targets. By \Cref{lem:nrd-q-neighbor-condition},
$$\nn(v)=\nn(u_J)\nn(u_I)^{-1}=\nrd(J)\nrd(I)^{-1}=\qq.$$
Since $\nn(v)=\qq$ is coprime to the characteristic $\pp$, the isogeny $v$ is separable, and $\chi(\ker v)=\qq$. Since $A$ is a PID and $\qq$ is prime, $\ker v\cong A/\qq$. Thus, $v$ is a cyclic $\qq$-isogeny, concluding the proof.
\end{proof}

Since $D$ is ramified precisely at $\pp$ and $\infty$ and $\qq\neq\pp$, we have $D_\qq\cong M_2(F_\qq)$. After a suitable choice of isomorphism, we may also assume that $\O_\qq\cong M_2(A_\qq)$.

\begin{defn}[$A_\qq$-lattice, Bruhat--Tits tree]
\label{defn:Aq-lattice-Bruhat-Tits-tree}
    An \emph{$A_\qq$-lattice} in $F_\qq^2$ is a free $A_\qq$-submodule $L\subset F_\qq^2$ of rank two such that $F_\qq L=F_\qq^2$. Two lattices $L$ and $L'$ are \emph{homothetic} if $L'=cL$ for some $c\in F_\qq^\times$. The \emph{Bruhat--Tits tree} $\mathcal T_\qq$ is the graph whose vertices are the homothety classes of $A_\qq$-lattices in $F_\qq^2$. Two vertices are adjacent if representatives may be chosen such that $\qq L\subsetneq L'\subsetneq L$; equivalently, $L/L'\cong A/\qq$.
\end{defn}

Each vertex of the $\mathcal T_\qq$ has $|\F_\qq|+1$ neighbors, corresponding to the one-dimensional subspaces of $L/\qq L\cong\F_\qq^2$. Furthermore, the tree $\mathcal T_\qq$ is connected.

A principal left ideal $M_2(A_\qq)b$ corresponds to the $A_\qq$-lattice generated by the rows of $b$. Left multiplication by an element of $M_2(A_\qq)^\times$ changes the row generators but not the lattice. If $L'$ is adjacent to $L$, then the corresponding left ideals satisfy $\qq I\subsetneq J\subsetneq I$ and $I/J\cong(L/L')^2\cong(A/\qq)^2$; hence, adjacency in $\mathcal T_\qq$ is precisely the $\qq$-neighbor relation.

\subsection{Strong approximation and the quotient graph}
\label{sec:strong-approximation}
This subsection follows theory established in \cite[\S28]{VoightQuaternionAlgebras2021}, reformulated to our setting. Define the \emph{norm-one group} of $D$ by $D^1\coloneq\{x\in D^\times\colon\nrd(x)=1\}$. Similarly, define $D_v^1\coloneq\{x\in D_v^\times\colon\nrd(x)=1\}$ and $\O_v^1\coloneq\O_v^\times\cap D_v^1$.

\begin{defn}[Restricted product]
\label{defn:restricted-product}
    Set $S\coloneq\{\infty,\qq\}$. The restricted product of the groups $D_v^1$ away from $S$ is
    $$\widehat D^{1,S}\coloneq\prod_{v\notin S}{}'D_v^1,$$
    where the restricted product is taken with respect to the subgroups $\O_v^1$; that is, an element of $\widehat D^{1,S}$ is a tuple $(x_v)_{v\notin S}$ such that $x_v\in D_v^1$ for every $v\notin S$ and $x_v\in\O_v^1$ for all but finitely many $v$.
\end{defn}

Let $S$ be a set of places. A quaternion algebra $D$ is \emph{$S$-indefinite} if it is split at some place in $S$. In particular, $D_\qq^1\cong\operatorname{SL}_2(F_\qq)$, which is noncompact, hence $D$ is $S$-indefinite.

\begin{theorem}[Strong approximation {\cite[Main Theorem 28.5.3]{VoightQuaternionAlgebras2021}}]
\label{thm:strong-approximation}
    Let $D$ be a quaternion algebra over a global field and suppose $D$ is $S$-indefinite. Then $D^1$ is dense in $\widehat D^{1,S}$.
\end{theorem}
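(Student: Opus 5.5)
Since this is the cited strong approximation theorem \cite[Main Theorem~28.5.3]{VoightQuaternionAlgebras2021}, the plan is to reproduce Kneser's strategy for $G=D^1$, a simply connected, absolutely almost simple group of type $A_1$ over $F$. Write $\mathbb A^S$ for the finite-away-from-$S$ adeles and $G(\mathbb A^S)=\widehat D^{1,S}$. Let $H\subseteq G(\mathbb A^S)$ be the closure of the image of $D^1$. The goal is to show $H=G(\mathbb A^S)$, which I would do one place at a time.

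\textbf{Step 1: weak approximation.} First I would show that $D^1$ is dense in $\prod_{v\in T}D_v^1$ for every finite set $T$ of places. The variety $\{x:\nrd(x)=1\}$ is $F$-rational via the Cayley map $y\mapsto(1+y)(1-y)^{-1}$ on pure quaternions. Weak approximation for affine space then transfers along this birational parametrization, because the parametrization is a local homeomorphism on a dense open set.

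\textbf{Step 2: each local factor lies in $H$.} For a fixed place $v\notin S$, set $H_v\coloneq H\cap D_v^1$, where $D_v^1$ is embedded with trivial components at the other places. Conjugation by $D^1$ preserves $H$, and $H_v$ is closed. By Step 1, $D^1$ is dense in $D_v^1$, so $H_v$ is normalized by all of $D_v^1$.

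\begin{itemize}
\item If $v$ is split, then $D_v^1\cong\operatorname{SL}_2(F_v)$. Any closed normal subgroup of $\operatorname{SL}_2(F_v)$ that is not central is the whole group, since $\operatorname{PSL}_2(F_v)$ is simple, so it suffices that $H_v$ is noncentral.
\item If $v=\pp$ is ramified, I would show that $H_\pp$ is open. Density of $D^1$ in the compact group $D_\pp^1$ then gives $H_\pp=D_\pp^1$.
\end{itemize}

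Once every $H_v$ is the full factor, $H$ contains the restricted product of the $D_v^1$. Hence $H=G(\mathbb A^S)$.

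\textbf{Step 3: the main obstacle.} The hard step is producing nontrivial elements of $H$ supported at a single place $v$. This is where $S$-indefiniteness enters. Fix a compact open subgroup $K=\prod_{w\notin S}K_w\subseteq G(\mathbb A^S)$. The group $\Gamma\coloneq D^1\cap K$ is an $S$-arithmetic group, and it embeds as a lattice in $D_\qq^1\cong\operatorname{SL}_2(F_\qq)$; here $D_\infty^1$ is compact, and this uses reduction theory, i.e.\ finiteness of class numbers. Since $\operatorname{SL}_2(F_\qq)$ is noncompact, $\Gamma$ is infinite.

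My first attempt would be the following. Let $K'\subseteq K$ be a smaller compact open subgroup that agrees with $K$ away from $v$ and is shrunk at $v$. I would argue that the closure of $\Gamma$ in $K$ is open; this is a congruence-subgroup-type statement which, for $\operatorname{SL}_2$ over a function field with $\qq$ split, follows from density of $\Gamma$ in $K$ modulo each finite level. I would prove it by counting cosets: $[\Gamma:\Gamma\cap K']$ equals $[K:K']$ by a volume argument, namely the finite covolume of $\Gamma$ together with the comparison of the volumes of $\Gamma\backslash D_\qq^1$ and $(\Gamma\cap K')\backslash D_\qq^1$. Then $H\supseteq\overline{\Gamma}$ contains an open subgroup of $K$. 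Intersecting with $D_v^1$ gives open, and in particular noncentral, subgroups $H_v$, as required in Step 2 for both the split and the ramified case.

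If the volume comparison proves too delicate, I would fall back on an ergodicity argument. By the Mautner phenomenon, the noncompact group $D_\qq^1$ acts ergodically on $D^1\backslash G(\mathbb A)$, which has finite volume. This forces $D^1D_\qq^1$ to be dense, and projecting away from $S$ yields the same density.
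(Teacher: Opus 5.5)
The paper does not prove this statement; it quotes Voight. Your Steps~1 and~2 are the standard Kneser-type reduction and are fine in outline. Step~3, which you rightly identify as the real content, is circular, so the proposal does not prove the theorem.

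The equality $[\Gamma:\Gamma\cap K']=[K:K']$ says exactly that $\Gamma\to K/K'$ is surjective, i.e.\ that $\Gamma$ is dense in $K$ at level $K'$. That is the theorem itself. Saying it ``follows from density of $\Gamma$ in $K$ modulo each finite level'' assumes the conclusion; this is a strong-approximation statement, not a congruence-subgroup one.

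A volume comparison cannot supply it either. The identity $\operatorname{vol}((\Gamma\cap K')\backslash D^1_\qq)=[\Gamma:\Gamma\cap K']\operatorname{vol}(\Gamma\backslash D^1_\qq)$ holds for every finite-index subgroup. Similarly, the adelic identity $\operatorname{vol}(D^1\backslash G(\mathbb A))=\operatorname{vol}(K')\sum_x\operatorname{vol}(\Gamma_x\backslash D^1_S)$ balances no matter how many $\Gamma$-orbits $K/K'$ breaks into. Here $D^1_S$ denotes $D^1_\infty\times D^1_\qq$, $x$ runs over representatives of $D^1\backslash G(\mathbb A^S)/K'$, and $\Gamma_x$ denotes $D^1\cap xK'x^{-1}$. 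Relating the covolume of $\Gamma$ directly to $\operatorname{vol}(D^1\backslash G(\mathbb A))/\operatorname{vol}(K)$ already presupposes that $D^1\backslash G(\mathbb A^S)/K$ is a single point.

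The ergodic fallback has the same defect. The $D^1_\qq$-invariant functions on $D^1\backslash G(\mathbb A)$ are functions on $D^1\backslash G(\mathbb A)/D^1_\qq$, so ergodicity of $D^1_\qq$ is \emph{equivalent} to the density you want. Mautner's phenomenon only upgrades invariance under an element of $D^1_\qq$ to invariance under subgroups of that same factor. Moore's ergodicity theorem on a product needs an irreducible lattice, which is again the density statement.

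A quick test shows that an essential ingredient is missing: nothing in Step~3 uses that $D^1$ is simply connected. The same reasoning applies verbatim to $D^\times/F^\times$, whose $S$-arithmetic group $\O[1/\qq]^\times/A[1/\qq]^\times$ is also a lattice in the noncompact group $\operatorname{PGL}_2(F_\qq)$. Yet strong approximation fails there. Reduced norms of $\O[1/\qq]^\times$ lie in $\F_q^\times\qq^{\Z}$, so for suitable levels $\mathfrak n$ they miss most determinant classes modulo squares in $(\O/\mathfrak n\O)^\times$.

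Moreover, since $D$ is a division algebra, $D^1$ contains no unipotent elements, so the elementary-matrix argument for $\mathrm{SL}_2$ is unavailable. What you need is an argument that actually produces nontrivial elements of $\overline{D^1}$ supported at a single place, and that uses the structure of the simply connected group in an essential way.

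There are also two smaller points.
\begin{enumerate}
\item The Cayley map $(1+y)(1-y)^{-1}$ is identically $1$ in characteristic $2$, which occurs here for even $q$. Step~1 therefore needs another rational parametrization, e.g.\ projecting the quadric $\nrd=1$ from the point $1$.
\item In the ramified case, openness of $H_\pp$ together with density of $D^1$ in $D^1_\pp$ does not by itself give $H_\pp=D^1_\pp$, since the approximants have uncontrolled components at the other places. It does follow once all split factors are known to lie in $H$.
\end{enumerate}
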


\begin{remark}
    Suppose that for each $v\notin S$ we choose an open subset $U_v\subset D_v^1$, with $U_v=\O_v^1$ for all but finitely many $v$. Then there exists a single element $\gamma\in D^1$ such that $\gamma\in U_v$ for every $v\notin S$. This allows one to impose conditions at finitely many places while requiring $\gamma$ to be integral at all the remaining places.
\end{remark}

\begin{prop}
\label{prop:trivial-away-q}
    Every class in $\cl(\O)$ contains a representative $J$ such that $J_v=\O_v$ for every finite prime $v\neq\qq$.
\end{prop}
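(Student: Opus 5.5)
The plan is to start from an arbitrary fractional left $\O$-ideal $I$ and right-multiply it by a suitable $y\in D^\times$ so that $(Iy)_v=\O_v$ at every finite $v\neq\qq$. Since $(Iy)_v=\O_vx_vy$, this amounts to finding $y$ with $x_vy\in\O_v^\times$ for all finite $v\neq\qq$. The degree of freedom at $\qq$ (and at $\infty$) is exactly what will let us invoke \Cref{thm:strong-approximation} with $S=\{\infty,\qq\}$. That theorem only concerns norm-one elements, so we first correct the reduced norm globally and then fix the remaining local discrepancies with an element of $D^1$.

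\emph{Step 1: normalizing the norm.} Since $A$ is a PID, $\nrd(I)=cA$ for some $c\in F^\times$. I would invoke the norm theorem for quaternion algebras (Hasse--Schilling--Maass/Eichler). Over a global function field there are no real places, so $\nrd(D^\times)=F^\times$. Choose $y_0\in D^\times$ with $\nrd(y_0)=c^{-1}$ and replace $I$ by $I'\coloneq Iy_0$, which lies in the same class. Writing $I'_v=\O_vx'_v$ with $x'_v\coloneq x_vy_0$, we get $\nrd(x'_v)\in A_v^\times$ for every finite $v$.

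\emph{Step 2: making the local generators norm one.} For each finite $v$, the map $\nrd\colon\O_v^\times\to A_v^\times$ is surjective. In the split case this is the determinant on $\gl_2(A_v)$. In the ramified case $v=\pp$, $\O_\pp$ is the unique maximal order of the division algebra, and its unit norms are all of $A_\pp^\times$ (via the unramified quadratic subfield). Hence we may replace $x'_v$ by $u_vx'_v$ with $u_v\in\O_v^\times$, which does not change $I'_v$, and arrange $x'_v\in D_v^1$. Moreover, $I_v=\O_v$ and $y_0\in\O_v^\times$ for all but finitely many $v$, so we may take $x'_v=1$ for almost all $v$. Thus $(x'_v)_{v\notin S}$ is an element of $\widehat D^{1,S}$.

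\emph{Step 3: strong approximation.} For finite $v\notin S$, set $U_v\coloneq (x'_v)^{-1}\O_v^1$. This is an open subset of $D_v^1$ because $\O_v^1$ is open and $x'_v\in D_v^1$, and $U_v=\O_v^1$ for almost all $v$. By \Cref{thm:strong-approximation} and the subsequent remark, there is $\gamma\in D^1$ with $x'_v\gamma\in\O_v^1$ for all finite $v\neq\qq$. Then $J\coloneq I y_0\gamma$ lies in $[I]$ and satisfies $J_v=\O_vx'_v\gamma=\O_v$ for every finite $v\neq\qq$, as required.

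I expect the main obstacle to be Step 1: justifying that every element of $F^\times$, or at least a generator of $\nrd(I)^{-1}$ up to a $\qq$-power and a unit, is a global reduced norm. The cleanest route is to cite the norm theorem. Alternatively, since $A^\times=\F_q^\times$ and we only need the norm up to local units, one could also argue via local surjectivity combined with the Hasse principle for norms from $D$.
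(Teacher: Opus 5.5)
Your proposal is correct and follows essentially the same route as the paper: fix the reduced norm globally with the Hasse--Schilling norm theorem, rescale the local generators to norm one using surjectivity of $\nrd$ on $\O_v^\times$, and apply strong approximation with $S=\{\infty,\qq\}$ and $U_v=z_v^{-1}\O_v^1$. The only cosmetic difference is where the norm target comes from: you read off $c$ from the principal ideal $\nrd(I)=cA$, which implicitly uses that $\nrd(I)$ localizes to $\nrd(I_v)$. The paper instead passes to $I[1/\qq]$ and builds $a$ directly from the local valuations $m_v$, using that $A[1/\qq]$ is a PID.
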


\begin{proof}
    Let $I$ be a fractional left $\O$-ideal. Set $R\coloneq A[1/\qq]$, $\O_R\coloneq\O[1/\qq]$, and $L\coloneq I[1/\qq]$. We aim to show that the left $\O_R$-ideal $L$ is principal.

    For each finite prime $v\neq\qq$, choose $x_v\in D_v^\times$ such that $L_v=\O_vx_v$. Define $m_v\coloneq v(\nrd(x_v))$. Observe that only finitely many $m_v$ are nonzero. Moreover, $m_v$ does not depend on the choice of $x_v$: if $x_v'=u_vx_v$ for some $u_v\in\O_v^\times$, then $\nrd(u_v)\in A_v^\times$, so $v(\nrd(x_v'))=v(\nrd(x_v))$.

    Since $A=\F_q[T]$ is a principal ideal domain, its localization $R=A[1/\qq]$ is also a principal ideal domain. Hence there exists $a\in F^\times$ such that $v(a)=m_v$ for every finite prime $v\neq\qq$.

    By the Hasse--Schilling norm theorem \cite[Main Theorem~14.7.4]{VoightQuaternionAlgebras2021},
    $$\nrd(D^\times)=F_{>0,\Omega}^{\times},$$
    where $\Omega$ is the set of real places at which $D$ is ramified. Since $F$ is a global function field, $\Omega=\varnothing$, and hence $\nrd(D^\times)=F^\times$. Choose $\alpha_0\in D^\times$ such that $\nrd(\alpha_0)=a$. Then $v(\nrd(x_v\alpha_0^{-1}))=m_v-v(a)=0$, so $\nrd(x_v\alpha_0^{-1})\in A_v^\times$.

    The reduced norm on the units of a local maximal order is surjective, that is, $\nrd(\O_v^\times)=A_v^\times$ \cite[Lemma 13.4.9]{VoightQuaternionAlgebras2021}. We may therefore choose $u_v\in\O_v^\times$ such that $\nrd(u_v)=\nrd(x_v\alpha_0^{-1})^{-1}$. Set $z_v\coloneq u_vx_v\alpha_0^{-1}$. Then $\nrd(z_v)=1$, so $z_v\in D_v^1$. For all but finitely many $v$, we also have $z_v\in\O_v^1$.

    For each $v\notin S$, define the open subset $U_v\coloneq z_v^{-1}\O_v^1\subset D_v^1$. We have $U_v=\O_v^1$ for all but finitely many $v$. By \Cref{thm:strong-approximation}, there exists a single $\gamma\in D^1$ such that $\gamma\in z_v^{-1}\O_v^1$ for every $v\notin S$. Equivalently, $z_v\gamma\in\O_v^1$ for every finite prime $v\neq\qq$.

    Fix $d\coloneq\alpha_0^{-1}\gamma\in D^\times$. Since $u_vx_vd=z_v\gamma\in\O_v^\times$, we obtain $L_vd=\O_vx_vd=\O_v$ for every finite prime $v\neq\qq$. Therefore $Ld=\O_R$. Finally, set $J\coloneq Id$. The ideals $I$ and $J$ are equivalent, and $J[1/\qq]=I[1/\qq]d=Ld=\O[1/\qq]$. Localizing at any finite prime $v\neq\qq$ gives $J_v=\O_v$. Hence the ideals $I$ and $J$ are equivalent, concluding the proof.
\end{proof}

Define $\Gamma_\qq\coloneq\O[1/\qq]^\times/A[1/\qq]^\times$. Through the fixed isomorphism $D_\qq\cong M_2(F_\qq)$, let $\Gamma_\qq$ act on $\mathcal T_\qq$ by $\alpha\cdot[L]\coloneq[L\alpha^{-1}]$. The denominator consists of the central scalar units, which act trivially on homothety classes, so this action is well-defined.

\begin{prop}
\label{prop:isom-quotient-graph}
    The graph of left ideal classes of $\O$, with edges given by $\qq$-neighbor relations, is isomorphic to $\Gamma_\qq\backslash\mathcal T_\qq$.
\end{prop}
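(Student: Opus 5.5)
We will construct an explicit map from vertices of $\mathcal T_\qq$ to $\cl(\O)$ and show that it descends to a graph isomorphism $\Gamma_\qq\backslash\mathcal T_\qq\to\cl(\O)$.

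\textbf{The map.} Given a lattice class $[L]$, write $L$ as the row lattice of some $b\in\gl_2(F_\qq)=D_\qq^\times$. We define $J(L)$ as the unique fractional left $\O$-ideal with
\[J(L)_\qq=\O_\qq b \quad\text{and}\quad J(L)_v=\O_v \text{ for } v\neq\qq.\]
The local--global correspondence for lattices guarantees existence and uniqueness: $J(L)=\{x\in D\colon x\in\O_v b_v\ \text{for all } v\}$, where $b_v=1$ for $v\neq\qq$. Replacing $L$ by $cL$ with $c\in F_\qq^\times$ changes $J(L)$ by a scalar. Since $A[1/\qq]$ is a PID, we may write $c=\varpi^n u$ with $\varpi=\qq\in F$ and $u\in A_\qq^\times$. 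The factor $u$ does not change $\O_\qq b$, and $\varpi^n$ is a global element that is a unit away from $\qq$. Hence $J(cL)=J(L)\qq^n$, and the class $[J(L)]$ is well defined. Surjectivity is exactly \Cref{prop:trivial-away-q}: every class has a representative $J$ that is trivial away from $\qq$, and then $J_\qq=\O_\qq b$ for some $b$.

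\textbf{Fibers equal $\Gamma_\qq$-orbits.} Suppose $J(L)x=J(L')$ for some $x\in D^\times$. Localizing at $v\neq\qq$ gives $\O_v x=\O_v$, so $x\in\O_v^\times$ for all $v\neq\qq$, i.e.\ $x\in\O[1/\qq]^\times$. At $\qq$ we get $\O_\qq b x=\O_\qq b'$, so $L'=Lx$. This is $\alpha\cdot[L]$ with $\alpha=x^{-1}$, modulo the homothety already accounted for. Conversely, $\alpha\in\O[1/\qq]^\times$ sends $J(L)$ to the equivalent ideal $J(L)\alpha^{-1}=J(L\alpha^{-1})$. The scalars $A[1/\qq]^\times$ act trivially on both sides, so the vertex sets of $\Gamma_\qq\backslash\mathcal T_\qq$ and $\cl(\O)$ are in bijection.

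\textbf{Edges.} The map is equivariant. By the discussion preceding the subsection, adjacency of $L,L'$ in $\mathcal T_\qq$ means that the corresponding local ideals are $\qq$-neighbors at $\qq$. Since $J(L)$ and $J(L')$ agree away from $\qq$, \Cref{lem:nrd-q-neighbor-condition} (or \Cref{defn:q-neighbor} directly) shows that $J(L')$ is a $\qq$-neighbor of $J(L)$. Conversely, suppose $[I]$ and $[J]$ are $\qq$-neighbors with representatives $\qq I\subsetneq J\subsetneq I$. Choose $I$ trivial away from $\qq$ by \Cref{prop:trivial-away-q}, rescaling $J$ by the same element. By the proof of \Cref{lem:nrd-q-neighbor-condition}, $J_v=I_v=\O_v$ for $v\neq\qq$, and $J_\qq$ corresponds to a lattice adjacent to that of $I_\qq$. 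So every neighbor relation lifts to a tree edge.

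\textbf{Main obstacle.} The hardest step is the edge correspondence with multiplicities. The quotient graph $\Gamma_\qq\backslash\mathcal T_\qq$ may carry loops or multiple edges coming from nontrivial stabilizers. For connectedness one only needs adjacency to match, and I would state the isomorphism at the level of the adjacency relation. If multiplicities are needed, I would compare the $|\F_\qq|+1$ tree neighbors of $L$, modulo $\operatorname{Stab}_{\Gamma_\qq}(L)$, with the $\qq$-neighbors $J\subset J(L)$ counted up to right multiplication by the unit group of the right order of $J(L)$. These two groups coincide: the unit group is $\O_R(J(L))^\times/A^\times$, which should be compared with the stabilizer modulo $A[1/\qq]^\times$.
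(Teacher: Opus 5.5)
Your proposal is correct and takes essentially the same route as the paper. You represent every class by an ideal trivial away from $\qq$ via \Cref{prop:trivial-away-q}, show that equivalences between such ideals are right multiplications by $\O[1/\qq]^\times$ (so fibers are $\Gamma_\qq$-orbits once homotheties are absorbed), and match edges through the local row-lattice/$\qq$-neighbor dictionary at $\qq$, at the level of the adjacency relation just as the paper does. One cosmetic slip: the factorization $c=\varpi^n u$ holds because $\qq$ is a uniformizer of $A_\qq$ (the paper's $F_\qq^\times=A_\qq^\times\qq^{\Z}$), not because $A[1/\qq]$ is a PID.
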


\begin{proof}
    Given $b\in D_\qq^\times$, let $I(b)$ be the fractional left $\O$-ideal defined by $I(b)_v=\O_v$ if $v\neq\qq$ and $I(b)_v=\O_\qq b$ if $v=\qq$. By \Cref{prop:trivial-away-q}, every left ideal class has a representative $I(b)$.

    Suppose that $I(b)$ and $I(b')$ are equivalent. Then $I(b')=I(b)\alpha$ for some $\alpha\in D^\times$. For every finite prime $v\neq\qq$, we have $\O_v=\O_v\alpha$, so $\alpha\in\O_v^\times$; hence, $\alpha\in\O[1/\qq]^\times$. At $\qq$, the equality $\O_\qq b'=\O_\qq b\alpha$ holds if and only if $b'=ub\alpha$ for some $u\in\O_\qq^\times$, which is equivalent to
    $$\cl(\O)\cong\O_\qq^\times\backslash D_\qq^\times/\O[1/\qq]^\times.$$
    On the other hand, the map sending $b$ to the row lattice of $b$ identifies $\O_\qq^\times\backslash D_\qq^\times/F_\qq^\times$ with the vertex set of $\mathcal T_\qq$: left multiplication by $\O_\qq^\times\cong\gl_2(A_\qq)$ changes the choice of basis, while right multiplication by $F_\qq^\times$ rescales the lattice. Since $F_\qq^\times=A_\qq^\times\qq^{\mathbb Z}$, every scalar lies in $\O_\qq^\times A[1/\qq]^\times$. Thus, quotienting by $\O[1/\qq]^\times$ induces on $\mathcal T_\qq$ the action of
    $$\Gamma_\qq\coloneq\O[1/\qq]^\times/A[1/\qq]^\times.$$
    Using the corresponding left action $\alpha\cdot[L]\coloneq[L\alpha^{-1}]$, we obtain $\cl(\O)\cong\Gamma_\qq\backslash\mathcal T_\qq$. Finally, the row-lattice interpretation shows that adjacency in $\mathcal T_\qq$ is precisely the local $\qq$-neighbor relation, and hence the edges of the quotient correspond to $\qq$-neighbor relations between ideal classes.
\end{proof}

\begin{proof}[Proof of \Cref{thm:connectedness}]
    By Gekeler's correspondence (\Cref{thm:gekeler-correspondence}) and \Cref{prop:isogeny-neighbor-correspondence,prop:isom-quotient-graph}, the graph $\Gamma_\pp(\qq)$ is isomorphic to $\Gamma_\qq\backslash\mathcal T_\qq$. Since the Bruhat--Tits tree $\mathcal T_\qq$ is connected, so is its quotient. Thus, $\Gamma_\pp(\qq)$ is connected.
\end{proof}

\section{Spectral Upper Bound on the Completeness Number}
\label{sec:upper-bound}
The Brandt matrix $B(\qq)$ records the number of isogenies between isomorphism classes of supersingular rank-two Drinfeld modules. We use its derived linear operator and the Ramanujan--Petersson theorem over function fields on its nontrivial spectrum to prove that every entry $b_{i,j}(\qq)$ is positive once the degree of the isogeny prime $\qq$ is sufficiently large.

\subsection{Brandt operators and weights}
\begin{defn}[Brandt matrix]
\label{defn:brandt-matrix}
    Let $\phi_1,\ldots,\phi_n$ be the representatives of supersingular Drinfeld $A$-modules of rank $2$ in fixed characteristic $\pp$ up to isomorphism. Let $\mm\in A$ be a monic polynomial. The \emph{Brandt matrix} $B(\mm)=(b_{i,j}(\mm))_{1\leq i,j\leq n}\in\mat_n(\Z)$ is defined by
    $$b_{i,j}(\mm)=\frac{1}{\#\aut(\phi_j)}\#\{u\in\hom(\phi_i,\phi_j)\colon \nn(u)=(\mm)\}.$$
\end{defn}

\begin{remark}
    If $\qq\neq\pp$ is prime, then $b_{i,j}(\qq)$ counts the cyclic $\qq$-isogenies $\phi_i\to\phi_j$, up to postcomposition by automorphisms of $\phi_j$. In particular, $\Gamma_\pp(\qq)$ is complete precisely when $b_{i,j}(\qq)>0$ for every $i,j$ (including $i=j$).
\end{remark}

\begin{defn}[Completeness number]
\label{defn:completeness-number}
For a finite prime $\pp$, define the \emph{completeness number} $$E(\pp)\coloneq\inf\left\{e\in\Z_{\geq1}:b_{i,j}(\qq)>0\text{ for every $i,j$ and every prime $\qq\neq\pp$ with $\deg\qq\geq e$}\right\}.$$
\end{defn}

Thus, $E(\pp)<\infty$ precisely when the graphs $\Gamma_\pp(\qq)$ are complete (including self-loops) for all $\qq$ of sufficiently large degree. \Cref{thm:upper-bound-intro} will show that this always holds.

For a nonzero ideal $\mathfrak a\subset A$, denote $|\mathfrak a|\coloneq\#(A/\mathfrak a)=q^{\deg\mathfrak a}$. Fix a prime $\qq\neq\pp$ and set $d\coloneq\deg\pp$ and $Q\coloneq|\qq|=q^{\deg\qq}$. For each supersingular isomorphism class $\phi_i$, define $w_i\coloneq\#\aut(\phi_i)/(q-1)$. Write
$$M\coloneq\sum_{i=1}^n\frac{1}{w_i}.$$

\begin{lemma}
\label{lem:automorphism-weights}
For every $i$, one has $w_i\in\{1,q+1\}$. If $d$ is odd, precisely one class $\phi_i$ has weight $w_i=q+1$, and if $d$ is even, every class has weight $1$. Furthermore,
$$M=\frac{|\pp|-1}{q^2-1}=\frac{q^d-1}{q^2-1}.$$
\end{lemma}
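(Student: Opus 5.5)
We determine the automorphism groups directly from the defining polynomial, and then obtain $M$ from Gekeler's mass formula. A supersingular rank-two Drinfeld module over $\bar k$ is given, up to isomorphism, by $\phi_T=\gamma(T)+g\tau+\Delta\tau^2$. An automorphism is a scalar $c\in\bar k^\times$ with $c\phi_T=\phi_Tc$. Comparing coefficients gives two conditions: $c^{q-1}=1$ whenever $g\neq0$, and $c^{q^2-1}=1$ always, since $\Delta\neq0$. Hence $\aut(\phi)=\F_q^\times$ if $g\neq0$, and $\aut(\phi)=\F_{q^2}^\times$ if $g=0$, that is, if $j(\phi)=0$. In particular $w_i\in\{1,q+1\}$.

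For the parity statement, it suffices to decide when $j=0$ is supersingular in characteristic $\pp$. Take $\phi_T=\gamma(T)+\tau^2$. Then $\phi$ has complex multiplication by $\F_{q^2}\otimes A=\F_{q^2}[T]$. By the Deuring-type criterion, $\phi$ is supersingular exactly when $\pp$ does not split in $\F_{q^2}(T)/F$. Since $\F_{q^2}(T)/F$ is the constant field extension, $\pp$ is inert iff $d$ is odd, and splits iff $d$ is even.

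Alternatively, one can verify supersingularity directly. The module $\phi$ is defined over $\F_{q^2}$-coefficients, so $\phi_\pp$ has coefficients in $\F_\pp$. Supersingularity means $\phi_\pp$ is purely inseparable of height $2d$, and I would check this by computing the height modulo $\pp$ using the relation $\tau^2\gamma(T)=\gamma(T)^{q^2}\tau^2$. This explicit route is the step I expect to be the main obstacle. I would therefore cite Gekeler's criterion \cite{GEKELE1991187} that $j=0$ is supersingular iff $d$ is odd, or the equivalent statement that the quaternion algebra $D$ contains $\F_{q^2}(T)$ iff $\pp$ is inert in it. The latter holds by the local embedding criterion: the extension must be nonsplit at the ramified places $\pp$ and $\infty$, and $\infty$ is inert because $\deg\infty=1$ is odd. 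Since there is only one $j$-invariant $0$, exactly one class has weight $q+1$ when $d$ is odd, and none does when $d$ is even.

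For the mass, I would invoke Gekeler's mass formula \cite{GEKELE1991187}:
$$\sum_i\frac{1}{\#\aut(\phi_i)}=\frac{|\pp|-1}{(q-1)(q^2-1)}.$$
This is the function-field Eichler mass formula, obtained from counting supersingular points on the Drinfeld modular curve, or via the zeta value of the quaternion order. Multiplying by $q-1$ gives
$$M=\frac{q^d-1}{q^2-1}.$$
As a consistency check, $M$ is an integer plus $1/(q+1)$ exactly when $d$ is odd, since $q^d-1\equiv q-1\pmod{q^2-1}$ for odd $d$. This agrees with the weight distribution found above.
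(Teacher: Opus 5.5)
Your proof is correct, but it uses different cited input than the paper. The automorphism computation is the same in both. The paper then quotes a single result, Gekeler's formula for the number $n$ of supersingular classes, which also records that $j=0$ is supersingular exactly when $d$ is odd. It obtains $M$ from $n$ and the weights by arithmetic. You instead quote the mass formula $\sum_i 1/\#\aut(\phi_i)=(|\pp|-1)/((q-1)(q^2-1))$, which gives $M$ immediately, and you treat the parity separately.

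Your ``consistency check'' is already a complete proof of the parity claim. Every $w_i\in\{1,q+1\}$ and at most one class has $j=0$, so $M=a+b/(q+1)$ with $a\in\Z$ and $b\in\{0,1\}$ the number of weight-$(q+1)$ classes. Since $(q^d-1)/(q^2-1)$ is an integer for even $d$ and has fractional part $1/(q+1)$ for odd $d$, this forces $b=1$ exactly when $d$ is odd.

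So you can drop the CM step, which as sketched needs more care. The Deuring-type criterion is a statement about reducing the generic-characteristic module $T+\tau^2$, not about $\gamma(T)+\tau^2$ directly. The embedding criterion by itself only shows that supersingularity of $j=0$ forces $d$ odd. For the converse you would need an extra argument: for example, $\F_{q^2}[T]$ lies in some maximal order of $D$, whose unit group then contains $\F_{q^2}^\times$, so by Gekeler's correspondence the corresponding supersingular module has $j=0$.

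As for what each route buys: the paper's citation gives $n$ explicitly, and $n$ is what the lower-bound section reuses. Yours rests on the more intrinsic Eichler mass and recovers $n$ as $M$ for even $d$ and $M+q/(q+1)$ for odd $d$.
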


\begin{proof}
Write $(\phi_i)_T=\gamma(T)+g_i\tau+\Delta_i\tau^2$, where $\Delta_i\neq0$. Every automorphism of $\phi_i$ is an element $c\in\bar{k}^\times$ satisfying $c(\phi_i)_T=(\phi_i)_Tc$. Comparing the coefficients of $\tau$ and $\tau^2$ gives $cg_i=g_ic^q$ and
$$c\Delta_i=\Delta_ic^{q^2}.$$
Since $\Delta_i\neq0$, the second equality implies
$$c^{q^2-1}=1.$$
Thus, $c\in\F_{q^2}^\times$, and if $g_i\neq 0$, then $c\in\F_q^\times$. Hence $w_i=1$, unless $g_i=0$, in which case $w_i=q+1$. The condition $g_i=0$ is equivalent to a zero $j$-invariant
$$j(\phi_i)=g_i^{q+1}/\Delta_i=0,$$
so it determines a unique isomorphism class.

By \cite[Theorem~5.9]{Gekeler1983}, restated in \cite[Theorem~4.1]{PapikianWei2015}, the number $n$ of supersingular rank-two Drinfeld modules in characteristic $\pp$, up to isomorphism, over the algebraic closure, is
\[
n = \begin{dcases}
\frac{|\pp|-1}{q^2-1}, & d \text{ even}, \\[1ex]
\frac{|\pp|-q}{q^2-1}+1, & d \text{ odd},
\end{dcases}
\]
and the unique class with $j$-invariant zero is supersingular if and only if $d$ is odd. The assertions about the weights follow. If $d$ is even, then $M=n$. If $d$ is odd, then
$$M=(n-1)+\frac{1}{q+1}=\frac{|\pp|-q}{q^2-1}+\frac{1}{q+1}=\frac{|\pp|-1}{q^2-1}.\qedhere$$
\end{proof}

\begin{lemma}
\label{lem:brandt-row-sum-duality}
For every fixed $i$,
$$\sum_{j=1}^n b_{i,j}(\qq)=Q+1.$$
Moreover, for each $i$ and $j$ we have $w_jb_{i,j}(\qq)=w_ib_{j,i}(\qq)$.
\end{lemma}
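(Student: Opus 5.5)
The plan is to reinterpret $b_{i,j}(\qq)$ via kernels, using that cyclic $\qq$-isogenies out of $\phi_i$ are determined up to target isomorphism by their kernels. For the row sum, fix $i$ and consider the set $\mathcal{C}_i$ of one-dimensional $A/\qq$-subspaces $C\subset\phi_i[\qq]\cong(A/\qq)^2$. There are exactly $Q+1$ of them. For each $C$, the quotient $\phi_i/C$ exists (a separable isogeny with prescribed $A$-stable finite kernel, unique up to postcomposition by an isomorphism), it is supersingular of characteristic $\pp$, and it is isomorphic to exactly one $\phi_j$.

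Next, I check that the $u\in\hom(\phi_i,\phi_j)$ with $\nn(u)=(\qq)$ and $\ker u=C$ form a single orbit under postcomposition by $\aut(\phi_j)$, and that this action is free. If $u,u'$ have the same kernel, then the separable-isogeny factorization gives $u'=cu$ with $c$ an isomorphism $\phi_j\to\phi_j$, i.e.\ $c\in\aut(\phi_j)$. Freeness is clear since $cu=u$ forces $c=1$.

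Since $\qq\neq\pp$, every $u$ with $\nn(u)=\qq$ is separable with $\chi(\ker u)=\qq$, hence cyclic, as in the proof of \Cref{prop:isogeny-neighbor-correspondence}. Therefore $b_{i,j}(\qq)=\#\{C\in\mathcal C_i : \phi_i/C\cong\phi_j\}$, and summing over $j$ gives $Q+1$.

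For the symmetry $w_jb_{i,j}=w_ib_{j,i}$, I use the dual isogeny. For a cyclic $\qq$-isogeny $u\colon\phi_i\to\phi_j$ there is a unique $\hat u\colon\phi_j\to\phi_i$ with $\hat u u=\phi_{\qq}$ and $u\hat u=\phi_{\qq}$ (suitably normalized so that $\qq$ is monic). It exists because $\ker u\subset\phi_i[\qq]$, so $\phi_\qq$ factors through $u$. It is again cyclic of norm $\qq$, since $\nn$ is multiplicative and $\nn(\phi_\qq)=\qq^2$. The map $u\mapsto\hat u$ is a bijection between
\[H_{ij}\coloneq\{u\in\hom(\phi_i,\phi_j):\nn(u)=\qq\}\quad\text{and}\quad H_{ji}.\]
By definition $b_{i,j}=\#H_{ij}/\#\aut(\phi_j)$ and $b_{j,i}=\#H_{ji}/\#\aut(\phi_i)$. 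Hence
\[\#\aut(\phi_j)\,b_{i,j}=\#H_{ij}=\#H_{ji}=\#\aut(\phi_i)\,b_{j,i},\]
and dividing by $q-1$ gives $w_jb_{i,j}(\qq)=w_ib_{j,i}(\qq)$.

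The main obstacle is rigor in two places. The first is the existence and uniqueness of quotients by $A$-stable finite subgroups; I would cite the standard fact (e.g.\ \cite{papikian_drinfeld_2023}) that for a finite $\F_q$-subspace $C$ the polynomial $\prod_{c\in C}(x-c)$ is $\F_q$-linear, and that $A$-stability makes it a morphism to a new Drinfeld module of the same rank. The second is that $\hom$ here is over $\bar k$, so all relevant maps are defined. An alternative route for both statements, via \Cref{prop:isom-quotient-graph}, is to count the $Q+1$ neighbors of a lattice in $\mathcal T_\qq$ and weight them by stabilizers. I would use the direct isogeny argument above as the primary route.
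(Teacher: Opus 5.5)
Your proposal is correct and follows essentially the same route as the paper. For the row sum you count the $Q+1$ one-dimensional subspaces of $\phi_i[\qq]$ and group them by target class, and for the symmetry you use the dual-isogeny bijection between norm-$\qq$ isogenies $\phi_i\to\phi_j$ and $\phi_j\to\phi_i$, then divide by $q-1$. You add detail that the paper leaves implicit, namely the free transitive $\aut(\phi_j)$-action on isogenies with a given kernel and the construction of $\hat u$ by factoring $(\phi_i)_\qq$ through $u$; all of it is sound, though the second relation should read $u\hat u=(\phi_j)_\qq$ rather than $\phi_\qq$.
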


\begin{proof}
The cyclic $\qq$-isogenies with source $\phi_i$, up to postcomposition by automorphisms of the target, are determined by their kernels. Their kernels are the one-dimensional $A/\qq$-subspaces of $\phi_i[\qq]\cong(A/\qq)^2$. The number of such subspaces is $\#\PP^1(A/\qq)=Q+1$. Grouping the isogenies according to the isomorphism class of their targets gives
$$\sum_jb_{i,j}(\qq)=Q+1.$$

Dualization gives a bijection between the isogenies $u\in\hom(\phi_i,\phi_j)$ of norm $\qq$ and $u^\vee\in\hom(\phi_j,\phi_i)$ of norm $\qq$. Therefore, $\#\aut(\phi_j)b_{i,j}(\qq)=\#\aut(\phi_i)b_{j,i}(\qq)$, and division by $q-1$ gives the second identity.
\end{proof}

Let $V$ be an $n$-dimensional complex vector space with basis $e_1,\ldots,e_n$; that is,
$$V\coloneq\bigoplus_{i=1}^n\C e_i.$$
Define a Hermitian inner product on $V$ by
$$ \left\langle\sum_{i=1}^nx_ie_i,\sum_{i=1}^ny_ie_i\right\rangle\coloneq\sum_{i=1}^n\dfrac{x_i\ov{y_i}}{w_i},$$
where $\ov{y_i}$ denotes complex conjugation. Define the \emph{Brandt operator} $T_\qq\colon V\to V$ by
$$T_\qq e_j\coloneq\sum_i b_{i,j}(\qq)e_i.$$
By \Cref{lem:brandt-row-sum-duality},
$$\left\langle T_\qq e_j,e_i\right\rangle=\frac{b_{i,j}(\qq)}{w_i}=\frac{b_{j,i}(\qq)}{w_j}=\left\langle e_j,T_\qq e_i\right\rangle,$$
hence $T_\qq$ is self-adjoint.

Write $\1\coloneq e_1+\cdots+e_n$. The row-sum identity (\Cref{lem:brandt-row-sum-duality}) gives $T_\qq\1=(Q+1)\1$. Define
$$V_0\coloneq (\C\1)^\perp=\left\{\sum_ix_ie_i:\sum_i\dfrac{x_i}{w_i}=0\right\}.$$
Since $T_\qq$ is self-adjoint, it preserves $V_0$.

Denote by
$$\left\lVert T_\qq\big|_{V_0}\right\rVert\coloneq\sup_{\substack{x\in V_0\\x\neq0}}\frac{\|T_\qq x\|}{\|x\|}$$
the operator norm of the restriction of $T_\qq$ to $V_0$.

\begin{prop}[Upper bound on operator norm]
\label{prop:norm-bound}
    For every prime $\qq\neq\pp$, we have $\displaystyle\left\lVert T_\qq\big|_{V_0}\right\rVert\leq 2\sqrt Q$.
\end{prop}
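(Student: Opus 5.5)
Since $T_\qq$ is self-adjoint and preserves $V_0$, its restriction to $V_0$ is diagonalizable with real eigenvalues in an orthonormal basis. The operator norm $\lVert T_\qq|_{V_0}\rVert$ is therefore the largest absolute value of an eigenvalue of $T_\qq$ on $V_0$, so I would reduce the claim to a bound on eigenvalues: every eigenvalue $\lambda$ of $T_\qq$ on $V_0$ satisfies $|\lambda|\leq 2\sqrt Q$. The family $\{T_\mm\}$ of Brandt operators (defined through the $B(\mm)$ of \Cref{defn:brandt-matrix}) commutes and is simultaneously self-adjoint for the same inner product; this follows from the same dualization argument as in \Cref{lem:brandt-row-sum-duality}. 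Hence $V_0$ decomposes into common eigenvectors. It then suffices to show that each common eigenvector in $V_0$ corresponds to a cuspidal Hecke eigenform whose $\qq$-th Hecke eigenvalue equals $\lambda$.

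The key input is Gekeler's Jacquet--Langlands-type comparison between Brandt matrices and Hecke operators on harmonic cochains. By \Cref{prop:isom-quotient-graph}, functions on $\cl(\O)$ are functions on the finite set $D^\times\backslash\widehat D^\times/\widehat\O^\times$, i.e.\ automorphic forms on the multiplicative group of the definite algebra $D$, which is ramified at $\pp$ and $\infty$. The Hecke operator at $\qq$ on these forms is exactly $T_\qq$, since it sums over the $Q+1$ neighbours in $\mathcal T_\qq$. The Jacquet--Langlands correspondence over function fields, in the form given by Gekeler \cite{GEKELE1991187} relating $B(\mm)$ to Hecke operators on $\Gamma_0(\pp)$-harmonic cochains on the Bruhat--Tits tree at $\infty$, maps $V_0$ Hecke-equivariantly into the $\pp$-new cuspidal harmonic cochains (Drinfeld cusp forms of level $\pp$). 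The constant function $\1$, which is the one-dimensional representation factoring through $\nrd$, is precisely what is removed by passing to $V_0$. Any other one-dimensional constituents would be twists $\chi\circ\nrd$; I would verify that none occur here, using that $\nrd(\widehat\O^\times)=\widehat A^\times$ and that $F$ has class number one with $\nrd(D^\times)=F^\times$. This shows that only the constant class is nontrivial, so every eigenvector in $V_0$ corresponds to a cuspidal automorphic representation of $\gl_2$ over $F$.

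Next I would apply Drinfeld's theorem (the Ramanujan--Petersson conjecture for $\gl_2$ over function fields, proved via the cohomology of Drinfeld modular curves and Weil's Riemann hypothesis). It states that for a cuspidal eigenform unramified at $\qq$, the $\qq$-Hecke eigenvalue is $\alpha+\beta$ with $|\alpha|=|\beta|=\sqrt Q$, which gives $|\lambda|\leq2\sqrt Q$. Concretely, the cusp forms of level $\pp$ arise in $H^1$ of the Drinfeld modular curve $X_0(\pp)$, whose reduction at $\qq\neq\pp$ is good. By Eichler--Shimura, the eigenvalues of $T_\qq$ are traces of Frobenius on $H^1$, and these are bounded by $2\sqrt Q$ by the Weil bound.

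The main obstacle is the transfer step: proving carefully that $V_0$ maps injectively and Hecke-equivariantly into the cuspidal spectrum, with the normalization of $T_\qq$ matching the geometric Hecke operator. In particular, the weights $w_i$ and the $1/\#\aut(\phi_j)$ normalization must agree with the Hecke action on cochains. I would cite Gekeler's comparison \cite{GEKELE1991187} (see also \cite{PapikianWei2015}) for this rather than rederive it. Once it is in place, the Ramanujan bound finishes the proof.
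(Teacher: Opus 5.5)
Your proposal is correct and follows essentially the paper's route. Self-adjointness reduces $\lVert T_\qq|_{V_0}\rVert$ to the largest $|\lambda|$ over eigenvalues on $V_0$. The bound $|\lambda|\leq 2\sqrt Q$ is Drinfeld's Ramanujan--Petersson theorem transferred to level-$\pp$ Drinfeld cusp forms: the paper simply cites this from \cite[p.~319]{Papikian_2016}, while you sketch the Jacquet--Langlands and Eichler--Shimura mechanism behind that citation.

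One small imprecision: the dualization argument gives self-adjointness of each $T_\mm$ but not their commutativity. You do not need commutativity, though, since $T_\qq$ already acts by a scalar on each automorphic constituent of $V_0$.
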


\begin{proof}
    Let $Q+1,\lambda_2(\qq),\ldots,\lambda_n(\qq)$ be the eigenvalues of $T_\qq$, listed with multiplicity, where $Q+1$ is the eigenvalue corresponding to $\1$. Since $T_\qq$ is self-adjoint, by the spectral theorem we obtain an orthonormal basis $v_1,v_2,\ldots,v_n$ of $V$ consisting of eigenvectors of $T_\qq$. We may assume $v_1=\1/\lVert \1\rVert$; then the vectors $v_2,\ldots,v_n$ form an orthonormal basis of $V_0$. Furthermore, $T_\qq v_r=\lambda_r(\qq)v_r$ for each $2\leq r\leq n$.

    For any $$x=\sum^n_{r=2}c_rv_r\in V_0,$$
    we have 
    $$\lVert x\rVert^2=\sum^n_{r=2}\lvert c_r\rvert^2,\qquad\lVert T_\qq x\rVert^2=\sum^n_{r=2}\lvert\lambda_r(\qq)\rvert^2\lvert c_r\rvert^2.$$
    Therefore, $\lVert T_\qq x\rVert\leq\max_{r\geq 2}\lvert\lambda_r(\qq)\rvert\lVert x\rVert$. As derived in \cite[p.~319]{Papikian_2016} from Drinfeld's Ramanujan--Petersson theorem over function fields \cite{Drinfeld1988Petersson}, we have $|\lambda|\leq 2\sqrt Q$. The assertion follows.
\end{proof}

\subsection{An entrywise estimate}
The entries $b_{i,j}$ can be considered as having a main term given by the eigenvalue $Q+1$ and an error term bounded by \Cref{prop:norm-bound}.

\begin{prop}[Entrywise estimate]
\label{prop:entrywise-estimate}
    For every $i,j$, we have
    \[\left|b_{i,j}(\qq)-\frac{Q+1}{w_jM}\right|\leq\frac{2\sqrt Q}{w_jM}\sqrt{(w_iM-1)(w_jM-1)}.\]
\end{prop}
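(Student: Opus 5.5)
The idea is to write $b_{i,j}(\qq)$ as an inner product and split off the component along $\1$. The eigenvalue $Q+1$ on $\1$ gives the main term, and \Cref{prop:norm-bound} controls what remains. By the definition of the inner product, $\langle T_\qq e_j,e_i\rangle=b_{i,j}(\qq)/w_i$, so
\[b_{i,j}(\qq)=w_i\langle T_\qq e_j,e_i\rangle.\]
Two quantities will be needed. First, $\langle e_j,\1\rangle=1/w_j$. Second, $\lVert\1\rVert^2=\sum_i 1/w_i=M$.

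First decompose each basis vector orthogonally as
\[e_j=\frac{1}{w_jM}\1+e_j^0,\qquad e_j^0\in V_0.\]
The coefficient is $\langle e_j,\1\rangle/\lVert\1\rVert^2=1/(w_jM)$. Next apply $T_\qq$. Since $T_\qq\1=(Q+1)\1$ and $T_\qq$ preserves $V_0$, we get
\[T_\qq e_j=\frac{Q+1}{w_jM}\1+T_\qq e_j^0,\qquad T_\qq e_j^0\in V_0.\]
Now pair with $e_i=\frac{1}{w_iM}\1+e_i^0$. The cross terms vanish because $\1\perp V_0$. Using $\langle\1,e_i\rangle=1/w_i$, this yields the exact identity
\[b_{i,j}(\qq)=\frac{Q+1}{w_jM}+w_i\langle T_\qq e_j^0,e_i^0\rangle.\]

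The error term is then bounded by Cauchy--Schwarz together with \Cref{prop:norm-bound}:
\[w_i\lvert\langle T_\qq e_j^0,e_i^0\rangle\rvert\le 2\sqrt Q\,w_i\lVert e_j^0\rVert\,\lVert e_i^0\rVert.\]
By Pythagoras,
\[\lVert e_j^0\rVert^2=\lVert e_j\rVert^2-\frac{1}{w_j^2M}=\frac1{w_j}-\frac{1}{w_j^2M}=\frac{w_jM-1}{w_j^2M},\]
and similarly for $e_i^0$. Substituting these into the bound gives
\[2\sqrt Q\,w_i\cdot\frac{\sqrt{(w_iM-1)(w_jM-1)}}{w_iw_jM}=\frac{2\sqrt Q}{w_jM}\sqrt{(w_iM-1)(w_jM-1)},\]
which is exactly the claimed bound.

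There is no real obstacle here. The only points that need care are bookkeeping ones. One is keeping the weights straight, since the inner product carries $1/w_i$ and $b_{i,j}=w_i\langle T_\qq e_j,e_i\rangle$ rather than $w_j$. The other is checking that $w_iM-1\ge0$, so the square roots are real; this holds because $\lVert e_i^0\rVert^2\ge0$.
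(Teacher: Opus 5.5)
Your proof is correct and follows the paper's argument essentially step for step. Both use the orthogonal decomposition $e_j=\frac{1}{w_jM}\1+e_j^0$ to reach the exact identity $b_{i,j}(\qq)=\frac{Q+1}{w_jM}+w_i\langle T_\qq e_j^0,e_i^0\rangle$, and then bound the error term by Cauchy--Schwarz together with \Cref{prop:norm-bound}, using $\lVert e_r^0\rVert^2=\frac{w_rM-1}{w_r^2M}$.
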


\begin{proof}
    Let $P\colon V\to\C\1$ be the orthogonal projection of $V$ onto $\C\1$. Since $\langle\1,\1\rangle=M$ and $\langle e_j,\1\rangle=1/w_j$, we have
    $$P(e_j)=\frac{1}{w_jM}\1.$$
    Set $e_j^0\coloneq e_j-P(e_j)\in V_0$. Then
    $$T_\qq e_j=\frac{Q+1}{w_jM}\1+T_\qq e_j^0.$$
    Since the coefficient of $e_i$ in a vector $x$ is $w_i\langle x,e_i\rangle$, and $T_\qq e_j^0\in V_0$, comparison of the coefficient of $e_i$ gives
    $$b_{i,j}(\qq)-\frac{Q+1}{w_jM}=w_i\left\langle T_\qq e_j^0,e_i^0\right\rangle.$$
    By \Cref{prop:norm-bound} and the Cauchy--Schwarz inequality,
    \[\left|b_{i,j}(\qq)-\frac{Q+1}{w_jM}\right|\leq2\sqrt{Q}\,w_i\left\lVert e_j^0\right\rVert\left\lVert e_i^0\right\rVert.\]
    Finally, $$\left\lVert e_r^0\right\rVert^2=\frac{1}{w_r}\left(1-\frac{1}{w_rM}\right)=\frac{w_rM-1}{w_r^2M}$$ for every $r$, which gives the inequality.
\end{proof}

Denote $w_{\max}\coloneq\max_iw_i$ and define $C_\pp\coloneq Mw_{\max}-1$. By \Cref{lem:automorphism-weights},
\[
C_\pp = \begin{dcases}
\frac{q^d-q}{q-1}, & d \text{ odd}, \\[1ex]
\frac{q^d-q^2}{q^2-1}, & d \text{ even}.
\end{dcases}
\]
Moreover,
$$\sqrt{(w_iM-1)(w_jM-1)}\leq C_\pp$$
for every $i,j$. By \Cref{prop:entrywise-estimate},
$$b_{i,j}(\qq)\geq\frac{Q+1-2C_\pp\sqrt{Q}}{w_jM}.$$
Therefore, $B(\qq)$ has no zero entries whenever $Q+1>2C_\pp\sqrt Q$.

\subsection{Explicit upper bound}

\begin{proof}[Proof of \Cref{thm:upper-bound-intro}]
If $d=1$ or $d=2$, then $C_\pp=0$, and the preceding estimate shows that every $B(\qq)$ is positive for every prime $\qq\neq\pp$, hence $E(\pp)=1$.

Suppose now that $d\geq 3$. Write
$$x\coloneq\sqrt Q=q^{\deg\qq/2}.$$
The condition $Q+1>2C_\pp\sqrt Q$ is equivalent to $x^2-2C_\pp x+1>0$. The roots of $X^2-2C_\pp X+1$ are
$$C_\pp\pm\sqrt{C_\pp^2-1}.$$
The smaller root is less than $1$, whereas $x>1$. Thus, $B(\qq)$ has no zero entries whenever
$$x>C_\pp+\sqrt{C_\pp^2-1},$$
or equivalently whenever
$$\deg\qq>2\log_q\left(C_\pp+\sqrt{C_\pp^2-1}\right).$$
It follows that
\[E(\pp)\leq\left\lfloor2\log_q\left(C_\pp+\sqrt{C_\pp^2-1}\right)\right\rfloor+1.\]
Finally, $\sqrt{C_\pp^2-1}<C_\pp$ and, in both cases for the parity of $d$, we have
$$C_\pp+\sqrt{C_\pp^2-1}<2C_\pp<\frac{2q^d}{q-1}\leq q^{d+1}.$$
Therefore, $2\log_q\left(C_\pp+\sqrt{C_\pp^2-1}\right)<2d+2$, and hence $E(\pp)\leq 2d+2$.
\end{proof}

\begin{remark}
    Taking the parity of $d$ into consideration, one obtains the slightly sharper bounds $E(\pp)\leq2d+2$ if $d$ is odd and $q=2$, $E(\pp)\leq2d$ if $d$ is odd and $q\geq3$, $E(\pp)\leq2d-1$ if $d$ is even and $q=2$, and $E(\pp)\leq2d-2$ if $d$ is even and $q\geq3$.
\end{remark}

\section{Lower bounds on the completeness number}
\label{sec:lower-bound}
Denote $d\coloneq\deg\pp$. In this section, we prove lower bounds on the completeness number $E(\pp)$. We first obtain elementary parity-dependent bounds by comparing the row sums of the Brandt matrix with the number and weights of the supersingular isomorphism classes. When $d$ is odd, we then obtain a stronger bound by reducing the vanishing of a diagonal Brandt entry to a norm-representation problem over $\F_{q^2}[T]$.
\begin{prop}
\label{prop:lower-bound-easy}
    If $d\geq 6$ is even, then $E(\pp)\geq d-1$. If $d\geq 3$ is odd, then $E(\pp)\geq d$.
\end{prop}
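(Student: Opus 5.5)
The plan is to exhibit, for the stated degrees, a prime $\qq\neq\pp$ for which $B(\qq)$ must have a zero entry. We get this from a counting argument on a single row of $B(\qq)$, using the row-sum identity and the weight symmetry of \Cref{lem:brandt-row-sum-duality} together with the class number and weights of \Cref{lem:automorphism-weights}. Since $\F_q[T]$ has monic irreducibles of every degree, a prime $\qq$ of any prescribed degree $e\geq1$ exists. If $e\neq d$, then automatically $\qq\neq\pp$. So it suffices to show that $B(\qq)$ is not entrywise positive for one prime of degree $d-2$ (even $d$), respectively $d-1$ (odd $d$). By \Cref{defn:completeness-number} this forces $E(\pp)\geq d-1$, respectively $E(\pp)\geq d$.

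\emph{Even case.} Take $\deg\qq=d-2$, so $Q=q^{d-2}$. By \Cref{lem:automorphism-weights},
\[n=\frac{q^d-1}{q^2-1}=q^{d-2}+q^{d-4}+\cdots+q^2+1.\]
For $d\geq6$ this sum has at least the three distinct terms $q^{d-2}$, $q^{d-4}\geq q^2$ and $1$. Hence $n\geq q^{d-2}+q^{d-4}+1>Q+1$. The $b_{i,j}(\qq)$ are nonnegative integers, and each row has sum $Q+1<n$ by \Cref{lem:brandt-row-sum-duality}. By pigeonhole, every row contains a zero entry.

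\emph{Odd case.} Here a plain pigeonhole with $\deg\qq=d-1$ fails, since $n\approx q^{d-2}$ is far smaller than $Q=q^{d-1}$. Instead we use the special class $\phi_{j_0}$ with $j$-invariant $0$ and weight $w_{j_0}=q+1$; all other classes have weight $1$. For $i\neq j_0$, the symmetry $w_{j_0}b_{i,j_0}(\qq)=w_ib_{j_0,i}(\qq)$ gives $b_{j_0,i}(\qq)=(q+1)\,b_{i,j_0}(\qq)$. So every off-diagonal entry in row $j_0$ is a multiple of $q+1$. If all entries of that row were positive, we would get $b_{j_0,i}(\qq)\geq q+1$ for the $n-1$ indices $i\neq j_0$, and $b_{j_0,j_0}(\qq)\geq1$. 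The row-sum identity would then give
\[Q+1\geq (n-1)(q+1)+1=\frac{q^d-q}{q-1}+1,\]
using $n-1=(q^d-q)/(q^2-1)$. For $\deg\qq=d-1$ the left side is $q^{d-1}+1$. The right side is $q^{d-1}+q^{d-2}+\cdots+q+1$, which is strictly larger for $d\geq3$. This is a contradiction, so row $j_0$ has a zero entry.

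The main obstacle is the odd case: the naive count is off by a factor of about $q$. The key step is to notice that the symmetry relation forces divisibility by $q+1$ in the row of the weight-$(q+1)$ class, which recovers exactly that factor. The remaining work is routine arithmetic of geometric sums from \Cref{lem:automorphism-weights}. One should also check the boundary condition $d\geq6$ in the even case, since for $d=4$ one has $n=q^2+1=Q+1$ and the strict inequality fails.
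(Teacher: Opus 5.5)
Your proof is correct and follows the paper's argument essentially verbatim. In the even case you take $\deg\qq=d-2$ and apply pigeonhole to a row sum $Q+1<n$. In the odd case you take $\deg\qq=d-1$ and use the symmetry $w_jb_{i,j}(\qq)=w_ib_{j,i}(\qq)$ to force the off-diagonal entries in the row of the $j$-invariant-zero class to be multiples of $q+1$, which contradicts the row sum $q^{d-1}+1$; your side remarks on the existence of a prime $\qq\neq\pp$ of the required degree and on the failure at $d=4$ are correct details the paper leaves implicit.
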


\begin{proof}
    Suppose first that $d$ is even. By \Cref{lem:automorphism-weights}, each weight equals one and the number of vertices is
    $$n=\frac{q^d-1}{q^2-1}=1+q^2+\cdots+q^{d-2}.$$
    Fixing $\deg\qq=d-2$, each row has sum $q^{d-2}+1<n$ for all $d\geq 6$ and therefore has a zero entry.

    Suppose now that $d$ is odd. Index the unique class with weight $q+1$ by $1$. If $B(\qq)$ had no zero entries, then by \Cref{lem:brandt-row-sum-duality} we would have $b_{1,1}(\qq)\geq1$ and $b_{1,j}(\qq)\geq q+1$ for each $j\neq 1$. Since
    $$n-1=\frac{q^d-q}{q^2-1}=q+q^3+\cdots+q^{d-2},$$
    the first row of $B(\qq)$ would have sum at least $1+(n-1)(q+1)=1+q+q^2+\ldots+q^{d-1}$. Fixing $\deg\qq=d-1$, we have a contradiction, since the row sums equal $q^{d-1}+1$.
\end{proof}

For the rest of this section, assume $d$ to be odd. Index the supersingular isomorphism class with $j$-invariant zero by $1$ and normalize it to $(\phi_1)_T=T+\tau^2$. Write $A'\coloneq\F_{q^2}[T]$ and let $\sigma\colon A'\to A'$ be the unique nontrivial $A$-automorphism, given by
$$\sigma\left(\sum_i a_iT^i\right)=\sum_i a_i^qT^i,$$
and define the \emph{norm} of an element $a\in A'$ by
$$N(a)\coloneq a\sigma(a)\in A.$$
Recall that a cyclic $\qq$-isogeny of $\phi_1$ to itself corresponds to an element $u\in\End(\phi_1)$ with $\nrd(u)=\qq$, so writing $u=a+b\tau^d$ reduces its existence to $\qq=N(a)-\pp N(b)$. Denoting  $\pi\coloneq\tau^d$, we have $\End(\phi_1)=A'\oplus A'\pi$ \cite[Lemmas 3.7 and 3.9]{micheli2026stabilizationisogenyspacessupersingular}, hence $\displaystyle\nrd(a+b\pi)=N(a)-\pp N(b)$. For an integer $m\geq0$, define the \emph{norm set} $\N_m$ as the set of monic degree-$m$ polynomials $f\in A$ such that $f=a\sigma(a)$ for some $a\in A'$. A monic polynomial $f\in A$ is a norm from $A'=\F_{q^2}[T]$ if and only if every prime of odd degree occurs in $f$ with even valuation, since the odd-degree primes of $A$ remain inert in $A'$, while the even-degree primes split.

\begin{prop}
\label{prop:equiv-condition-nonzero-1-1}
    Let $\qq\neq\pp$ be a prime of degree $d+2h$, where $h\in\Z_{\geq0}$. Then $b_{1,1}(\qq)>0$ if and only if $\qq-\pp c=N(a)$ for some $c\in\N_{2h}$ and $a\in A'$.
\end{prop}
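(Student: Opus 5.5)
The plan is to reduce $b_{1,1}(\qq)>0$ to the existence of $u=a+b\pi\in\End(\phi_1)=A'\oplus A'\pi$ with $\nrd(u)=\qq$. Then we analyze the degrees in $\qq=N(a)-\pp N(b)$, using the fact that $N$ multiplies leading coefficients by their norms. By \Cref{prop:isogeny-neighbor-correspondence} and the definition of $B(\qq)$, $b_{1,1}(\qq)>0$ if and only if there is some $u\in\End(\phi_1)$ with $\nn(u)=(\qq)$. Since $\nn(u)=(\nrd(u))$, this means $\nrd(u)=c_0\qq$ for some $c_0\in\F_q^\times$. The norm $N\colon\F_{q^2}^\times\to\F_q^\times$ is surjective, so we can multiply $u$ by a suitable scalar $\lambda\in\F_{q^2}^\times\subset A'$, which has $\nrd(\lambda)=N(\lambda)$. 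This lets us assume $\nrd(u)=\qq$ exactly, that is, $\qq=N(a)-\pp N(b)$ with $a,b\in A'$.

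For the forward direction, suppose $\qq=N(a)-\pp N(b)$. The key degree observation is that $N(a)$ has even degree $2\deg a$, while $\pp N(b)$ has odd degree $d+2\deg b$ whenever $b\neq0$. Also, $\deg\qq=d+2h$ is odd, so $\qq\neq N(a)$ and hence $b\neq0$. The two terms have degrees of different parity, so they cannot cancel in their top degree, and $\deg\qq=\max(2\deg a,\,d+2\deg b)$. Since $\deg\qq$ is odd, the maximum is $d+2\deg b$. This gives $\deg b=h$ and $2\deg a<d+2h$. The leading coefficient of $-\pp N(b)$ must equal $1$, the leading coefficient of $\qq$. Thus $-N(b)$ is monic of degree $2h$. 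Using surjectivity of the norm on $\F_{q^2}^\times$, pick $\mu\in\F_{q^2}^\times$ with $N(\mu)=-1$ and set $b'\coloneq\mu b$. Then $c\coloneq N(b')=-N(b)$ is monic of degree $2h$, and it is a norm by construction, so $c\in\N_{2h}$. Therefore $\qq-\pp c=N(a)$, as required.

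For the converse, suppose $\qq-\pp c=N(a)$ with $c=N(b_0)\in\N_{2h}$. Choose $\mu$ with $N(\mu)=-1$ again and set $b\coloneq\mu b_0$. Then $\nrd(a+b\pi)=N(a)-\pp N(b)=N(a)+\pp c=\qq$. So $u=a+b\pi$ is an endomorphism of norm $\qq$, and by the first step this gives $b_{1,1}(\qq)>0$.

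I expect the main obstacle to be the first step rather than the degree analysis. There one must check carefully that elements of $\End(\phi_1)$ of reduced norm $\qq$ really correspond to cyclic $\qq$-isogenies $\phi_1\to\phi_1$ counted by $b_{1,1}$, including the normalization of the unit ambiguity. The remark preceding the statement, together with \Cref{defn:brandt-matrix} (which counts $u$ with $\nn(u)=(\qq)$) and the identity $\nn(u)=(\nrd u)$, handles this.
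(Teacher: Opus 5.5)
Your proof is correct and follows the paper's argument essentially step for step. You normalize $\nrd(u)=\xi\qq$ to $\xi=1$ via surjectivity of $N\colon\F_{q^2}^\times\to\F_q^\times$, use that $\deg N(a)$ is even while $\deg \pp N(b)$ is odd to force $\deg b=h$ and $-N(b)$ monic, and absorb the sign with an element of norm $-1$ in both directions. Your appeal to \Cref{prop:isogeny-neighbor-correspondence} is unnecessary, since the Brandt matrix is defined directly as counting endomorphisms $u$ with $\nn(u)=(\qq)$.
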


\begin{proof}
    The condition $b_{1,1}(\qq)>0$ is equivalent to $N(a)-\pp N(b)=\xi\qq$ for some $a,b\in A'$ and $\xi\in\F_q^\times$. Since $N\colon\F_{q^2}^\times\to\F_q^\times$ is surjective, we may assume $\xi=1$ by scaling $a+b\pi$.

    The degrees of $N(a)$ and $N(b)$ are even, whereas $d$ is odd. Thus, the leading term of $N(a)-\pp N(b)=\qq$ is determined by $-\pp N(b)$. Therefore, $\deg b=h$ and $c\coloneq -N(b)$ is monic of degree $2h$. Since $-1$ is a constant norm, $c\in\N_{2h}$, and $\qq-\pp c=N(a)$. Conversely, if $\qq-\pp c=N(a)$ with $c\in\N_{2h}$, choose $b\in A'$ with $N(b)=-c$.
\end{proof}

\begin{lemma}
\label{lem:non-norm}
    For any integer $h\geq 2$, there exist polynomials $M\in A$ and $a_0\in(A/M)^\times$ with $\deg M\ll q^{2h}$ (with an absolute implied constant), such that whenever $f\equiv a_0\pmod M$, the polynomial $f-\pp c$ is not of the form $N(a)$ for any $a\in A'$ and $c\in\N_{2h}$.
\end{lemma}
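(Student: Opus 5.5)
The plan is to force, for every admissible $c$ at once, an odd-degree prime that divides $f-\pp c$ to exactly the first power. By the criterion recalled before \Cref{prop:equiv-condition-nonzero-1-1}, a monic polynomial is a norm from $A'$ if and only if every odd-degree prime occurs in it with even valuation. The leading coefficient of $f-\pp c$ plays no role, since every constant is a norm. So it suffices to construct $M=\prod_i r_i^2$, with the $r_i$ distinct odd-degree primes not dividing $\pp$, together with residues $x_i\in A/r_i^2$, with one property: every $c\in\N_{2h}$ satisfies $\pp c\equiv x_i\pmod{r_i}$ and $\pp c\not\equiv x_i\pmod{r_i^2}$ for some $i$. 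If $f\equiv x_i\pmod{r_i^2}$, then $v_{r_i}(f-\pp c)=1$ for that $i$, so $f-\pp c$ is not a norm. The residue $a_0$ is then the CRT lift of the $x_i$. It is a unit modulo $M$ exactly when $r_i\nmid x_i$ for all $i$, and we arrange this by choosing each $x_i$ outside the class of $0$ modulo $r_i$.

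The baseline construction attaches to each $c\in\N_{2h}$ its own odd-degree prime $r_c\nmid\pp c$, and sets $x_c\coloneq\pp c+r_c$. Since $\#\N_{2h}\leq q^{2h}$, this already proves the lemma with some bound on $\deg M$. However, it only gives $\deg M\ll h\,q^{2h}$, because one needs about $q^{2h}$ distinct primes, and these force degrees around $2h$. The same loss persists even if one uses the Landau-type count $\#\N_{2h}\asymp q^{2h}/\sqrt h$, which still leaves a factor $\sqrt h$. To reach $\deg M\ll q^{2h}$ the primes have to be shared among many $c$, so that a single prime $r$ of degree $k\le 2h$ disposes of a whole residue class at once. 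The class $\{c:\pp c\equiv x\pmod r\}$ contains roughly $q^{2h-k}$ monic polynomials of degree $2h$. The only exception within it is the subclass $\pp c\equiv x\pmod{r^2}$, a fraction $q^{-k}$ of the class, which must be caught by another prime.

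So I would run a greedy covering. Process the odd-degree primes $r\nmid\pp$ in increasing order of degree. For each such $r$, pigeonhole gives a nonzero class $x\bmod r$ containing at least a $(1-q^{-k})/q^k$-fraction of the currently uncovered $c$. Then lift $x$ to the class modulo $r^2$ that excludes the most uncovered elements. Because distinct primes are independent under the CRT, the uncovered set shrinks by a factor of roughly $\prod_r(1-q^{-\deg r})$. I would stop once the remaining set is small enough that the individual-prime construction above, applied to the leftovers, costs $O(q^{2h})$. Then $\deg M$ is the sum of $2\deg r$ over the greedy primes plus the cost of the leftover step.

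The main obstacle is the bookkeeping that makes the total $\deg M$ bounded by an absolute constant times $q^{2h}$. The reason is that $\sum_{k\text{ odd}}\pi_{\mathrm{odd}}(k)\,q^{-k}$ grows only like $\tfrac12\log K$, so the density saving from the greedy phase is slow. I would first try to balance the greedy phase over primes of degree up to about $2h$ against the leftover phase. If needed, I would also allow several primes of the same degree to target the same large uncovered classes, and I would use the prime polynomial theorem for the counts. I expect the hypothesis $h\geq2$ to be what absorbs the constants in these estimates.
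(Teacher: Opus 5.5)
Your two-phase plan is the one the paper uses, with your greedy rule standing in for its random choice. Let $\P$ be the set of odd-degree primes $\rr\neq\pp$ with $\deg\rr\le 2h-1$. The paper chooses the residues $a_\rr\bmod\rr^2$ uniformly at random for \emph{all} $\rr\in\P$, bounds the expected number of uncovered $c\in\N_{2h}$, and gives each leftover $c$ its own prime $\ss_c$ of odd degree $2h+e$. The gap is that you stop exactly at the estimate that separates $\deg M\ll q^{2h}$ from the trivial $\deg M\ll h\,q^{2h}$, and you misread it as an obstacle. The slow growth $\sum_{\rr\in\P}|\rr|^{-1}=\tfrac12\log h+O(1)$ is precisely what is needed. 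It gives $\Delta=\prod_{\rr\in\P}(1-|\rr|^{-1})\ll(h+1)^{-1/2}$. Combined with the count $\#\N_{2h}\ll q^{2h}/\sqrt{h+1}$ that you already invoke, the leftover set has size $\ll q^{2h}/(h+1)$. These two factors of $\sqrt h$ cancel the degree $2h+e$ of the leftover primes, with $e$ a fixed odd integer large enough to supply distinct such primes. So the second phase costs $2(2h+e)\cdot O(q^{2h}/(h+1))=O(q^{2h})$, while the first costs $2\sum_{\rr\in\P}\deg\rr\le 2\sum_{k\le 2h-1}q^k\ll q^{2h}$. No balancing, stopping rule, or multiple primes per class is required: simply use every prime in $\P$.

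Second, your pigeonhole step is false as stated. An uncovered $c$ with $r\mid c$ (hence $r^2\mid c$, since $c$ is a norm) has $\pp c\equiv0\pmod r$ and can never be caught by $r$. So no nonzero class need carry a fixed fraction of \emph{all} uncovered $c$. Appealing to CRT independence also does not justify multiplicative shrinkage for this structured set. What does work is averaging over $a\in(A/r^2)^\times$. Each uncovered $c$ with $r\nmid c$ satisfies $\nu_r(a-\pp c)=1$ for exactly $|r|-1$ of the $|r|(|r|-1)$ choices of $a$. Hence some choice gives $|U_{\mathrm{new}}|\le(1-|r|^{-1})|U|+|r|^{-1}\#\N_{2h-2\deg r}$. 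The additive terms are damped by the factors $1-|r'|^{-1}$ of the primes processed afterwards, and then contribute only $O(q^{2h-2}/h)$, so the bound $\ll q^{2h}/(h+1)$ survives. The paper avoids this recursion by computing $\mathbb E|S|=\Delta\sum_{c\in\N_{2h}}\prod_{\rr\in\P,\,\rr\mid c}(1-|\rr|^{-1})^{-1}$ exactly. It then controls the divisibility correction through the sum, over squarefree $D$ supported on $\P$, of $\#\N_{2(h-\deg D)}/\prod_{\rr\mid D}(|\rr|-1)$. With these two points supplied, your greedy argument becomes a valid deterministic variant of the paper's proof.
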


\begin{proof}
    Let $\ell\coloneq 2h-1$, and let $\P$ denote the set of odd-degree primes $\rr\neq \pp$ with $\deg\rr\leq\ell$. Choose unit classes $a_\rr\bmod \rr^2$, and let $S\subseteq\N_{2h}$ consist of the $c$ for which $\nu_\rr(a_\rr-\pp c)\neq1$ for every $\rr\in\P$. Here and throughout, $\nu_\rr$ denotes the $\rr$-adic valuation.

    Choose the classes $a_{\rr}$ independently and uniformly from $(A/\rr^2)^\times$. For fixed $c\in\mathcal N_{2h}$ and $\rr\in\P$ with $\rr\nmid c$, exactly a proportion $|\rr|^{-1}$ of the choices of $a_{\rr}$ satisfy $\nu_{\rr}(a_{\rr}-\pp c)=1$. If $\rr\mid c$, then $\rr^2\mid c$, since $c$ is a norm and $\deg\rr$ is odd, and hence $\nu_{\rr}(a_{\rr}-\pp c)=0$ for every unit $a_{\rr}$. Thus, writing
    	\[
	\Delta\coloneq
	\prod_{\rr\in\P}
	\left(1-\frac1{|\rr|}\right),
	\]
	we obtain
	\[
	\mathbb E|S|
	=
	\Delta\sum_{c\in\mathcal N_{2h}}
	\prod_{\substack{\rr\in\P\\ \rr\mid c}}
	\left(1-\frac1{|\rr|}\right)^{-1}.
	\]

	For every squarefree $D$ such that
    \[\supp(D)\coloneq\{\rr:\rr\mid D\}\subseteq\P\]
    and $\deg D\le h$, division by $D^2$ gives a bijection
	\[
	\{c\in\mathcal N_{2h}:D^2\mid c\}
	\longrightarrow
	\mathcal N_{2(h-\deg D)}.
	\]
	Therefore,
	\[
	\mathbb E|S|
	=
	\Delta
	\sum_{\substack{D\ {\rm squarefree}\\
	\supp(D)\subseteq\P\\
	\deg D\le h}}
	\frac{\#\mathcal N_{2(h-\deg D)}}
	{\prod_{\rr\mid D}(|\rr|-1)}.
	\]
    By \cite[Theorem~2.1]{Gorodetsky2017},
    \[
    \#\mathcal N_{2m}\ll\frac{q^{2m}}{\sqrt{m+1}}
    \]
    with an absolute implied constant.     Since
    \[
    \frac1{\sqrt{h-\deg D+1}}\le\frac{\sqrt{\deg D+1}}{\sqrt{h+1}}
    \]
    and
    \[
    \sum_D\frac{q^{-2\deg D}\sqrt{\deg D+1}}{\prod_{\rr\mid D}(|\rr|-1)}\ll1,
    \]
    where the latter follows since there are at most $q^m$ monic polynomials $D$ of degree $m$ and $\prod_{\rr\mid D}(|\rr|-1)\geq1$, so that the sum is bounded by
    \[
    \sum_{m\geq0}q^{-m}\sqrt{m+1}\ll1,
    \]
    it follows that
    \[
    \mathbb E|S|\ll\frac{q^{2h}}{\sqrt{h+1}}\Delta.
    \]
    The prime polynomial theorem gives $\Delta\ll(h+1)^{-1/2}$, and consequently
    \[
    \mathbb E|S|\ll\frac{q^{2h}}{h+1}.
    \]
    Hence the classes $a_{\rr}$ may be chosen so that
    \[
    \#S\ll\frac{q^{2h}}{h+1}.
    \]

    By the prime polynomial theorem, for a sufficiently large absolute odd integer $e$ we may choose distinct primes $\ss_c\neq\pp$ of degree $2h+e$, one for each $c\in S$. By the Chinese remainder theorem, there is a unit class $a_0\bmod M$ satisfying $a_0\equiv a_\rr\pmod{\rr^2}$ and $a_0\equiv\pp c+\ss_c\pmod{\ss_c^2}$ for $\rr\in\P$ and $c\in S$, where
    $$M\coloneq\prod_{\rr\in\P}\rr^2\prod_{c\in S}\ss_c^2.$$
    Moreover,
    \[\deg M=2\sum_{\rr\in\P}\deg\rr+2(2h+e)\#S\ll q^{2h},\]
where the first term is bounded using the prime polynomial theorem and
the second by the preceding estimate for $\#S$.

    If $f\equiv a_0\pmod M$, then $\nu_\rr(f-\pp c)=1$ for some $\rr\in\P$ when $c\notin S$, while $\nu_{\ss_c}(f-\pp c)=1$ when $c\in S$. Thus, $f-\pp c$ is not a norm from $A'$ for any $c\in\N_{2h}$.
\end{proof}

\begin{proof}[Proof of \Cref{thm:lower-bound}]
    The lower bounds of $d-1$ when $d$ is even and $d$ when $d$ is odd follow from \Cref{prop:lower-bound-easy}. For the logarithmic improvement when $d$ is odd, let $C_0$ be the absolute constant in \Cref{lem:non-norm} and choose $K_0$ sufficiently large so that
    $$C_02^{-K_0}\leq\frac{1}{4}.$$
    Fix
    $$h\coloneq\left\lfloor\frac{\log_q d-K_0}{2}\right\rfloor.$$
    Then for all sufficiently large odd $d$, \Cref{lem:non-norm} gives a unit class $a_0\bmod M$ with $\deg M\leq d/4$. Writing $n\coloneq d+2h$, the estimate for the number of prime polynomials in arithmetic progressions \cite[(3.3)]{KimMurty2022} gives
    $$\#\{\qq:\deg\qq=n,\ \qq\equiv a_0\!\!\!\pmod M\}\geq\frac{q^n}{n\Phi(M)}-\frac{(\deg M-1)q^{n/2}}{n}>0,$$
    since $\Phi(M)\leq q^{\deg M}$ and $\deg M\leq d/4$. Choose such a prime $\qq$. By \Cref{lem:non-norm}, none of the polynomials $\qq-\pp c$, with $c\in\N_{2h}$, is a norm. Therefore, \Cref{prop:equiv-condition-nonzero-1-1} gives $b_{1,1}(\qq)=0$ and thus
    $$E(\pp)\geq\deg\qq+1=d+2\left\lfloor\frac{\log_q d-K_0}{2}\right\rfloor+1\geq d+\log_q d-K$$
    for a suitable absolute $K>0$.
\end{proof}

\section{Drinfeld modular polynomials}
\label{sec:modular-polynomials}
The upper bound on $E(\pp)$ in \Cref{thm:upper-bound-intro} is obtained using spectral methods for the operator given by the Brandt matrix. To study its sharpness and determine which entries vanish below certain thresholds, we seek an algebraic criterion that may also yield a computational algorithm.

Rank-two Drinfeld modules can be described using modular polynomials \cite{BREUER200959,caranay-computing-mod-polynomials,HSIA1998236,chen2025cmdrinfeldmodulesselfisogenous}, which record the targets of all cyclic $\qq$-isogenies from a fixed vertex, counted with multiplicity. We combine these with the \emph{supersingular polynomial} $S_\pp(X)$, whose roots are the $j$-invariants of the graph vertices, to study all entries of the Brandt matrix (see also \cite{elguindy2013legendredrinfeldmodulesuniversal,hasegawa2017explicitformulasupersingularpolynomial}).

Let $\phi_1,\ldots,\phi_n$ represent the isomorphism classes of supersingular rank-two Drinfeld modules in characteristic $\pp$ over $\ov\F_\pp$, and write $j_i\coloneq j(\phi_i)$ for the $j$-invariant of $\phi_i$. 

\begin{defn}[Supersingular polynomial]
\label{defn:supersingular-polynomial}
    We define the \emph{supersingular polynomial in characteristic $\pp$} by
    $$S_\pp(X)\coloneq\prod_{i=1}^n(X-j_i).$$
\end{defn}

\begin{remark}
    The polynomial $S_\pp$ is squarefree, since $\phi_1,\ldots,\phi_n$ are pairwise nonisomorphic and the $j$-invariant is a complete invariant.
\end{remark}

The set $\{j_1,\ldots,j_n\}$ is stable under $\gal(\ov\F_\pp/\F_\pp)$. Indeed, if $\sigma\in\gal(\ov\F_\pp/\F_\pp)$, applying $\sigma$ to the coefficients of $\phi$ gives a Drinfeld module $\phi^\sigma$ of the same $A$-characteristic, with $j(\phi^\sigma)=\sigma(j(\phi))$. Furthermore, coefficientwise conjugation by $\sigma$ maps $\phi[\pp]$ to $\phi^\sigma[\pp]$, hence supersingularity is preserved. Thus, Galois conjugation permutes the roots $j_1,\ldots,j_n$, and the coefficients of $S_\pp$ lie in $\F_\pp$.

\begin{defn}
\label{defn:modular-polynomial}
    Let $\qq$ be a prime. The \emph{modular $\qq$-polynomial} is the polynomial $\Phi_\qq(X,Y)\in A[X,Y]$, normalized to be monic in $Y$, given by
    $$\Phi_\qq(j(\phi),Y)\coloneq\prod_{\substack{C\subseteq\phi[\qq]\\ C\cong A/\qq}}\bigl(Y-j(\phi/C)\bigr)$$
    for rank-two Drinfeld modules $\phi$ of $A$-characteristic different from $\qq$. We denote the coefficientwise reduction of $\Phi_\qq$ modulo $\pp$ by $\ov\Phi_\qq\in\F_\pp[X,Y]$.
\end{defn}

Fix a prime $\qq\neq\pp$ and write $Q\coloneq\#(A/\qq)$. Since $\phi_i[\qq]\cong(A/\qq)^2$, it contains exactly $Q+1$ cyclic $A/\qq$-submodules. The product in \Cref{defn:modular-polynomial} is indexed by these submodules, so the same target may occur with multiplicity.

\begin{prop}
\label{prop:modular-brandt-factorization}
    For every $1\leq i\leq n$, we have
    $$\ov\Phi_\qq(j_i,Y)=\prod_{j=1}^n(Y-j_j)^{b_{i,j}(\qq)}$$
    in $\ov\F_\pp[Y]$.
\end{prop}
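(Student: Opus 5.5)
The plan is to specialize the defining identity of $\Phi_\qq$ to a supersingular Drinfeld module in characteristic $\pp$. Then we count the factors by target isomorphism class and identify the multiplicities with Brandt matrix entries.

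\textbf{Reduction mod $\pp$.} The identity in \Cref{defn:modular-polynomial} holds for Drinfeld modules of any $A$-characteristic different from $\qq$. Because $\Phi_\qq$ has coefficients in $A$ and we evaluate at an $A$-field of characteristic $\pp$, the coefficients act through $\gamma$, i.e.\ through reduction modulo $\pp$. If one reads the definition as a statement over the generic fiber only, I would justify the specialization as follows. Both sides are built from the universal family over the Drinfeld modular curve over $A[1/\qq]$. The $\qq$-torsion is étale away from $\qq$, so the kernels $C$ and the quotients $\phi/C$ specialize compatibly to characteristic $\pp\neq\qq$. Taking this as given, for $\phi=\phi_i$ over $\ov\F_\pp$ we get
\[\ov\Phi_\qq(j_i,Y)=\prod_{\substack{C\subseteq\phi_i[\qq]\\ C\cong A/\qq}}\bigl(Y-j(\phi_i/C)\bigr).\]

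\textbf{Targets stay supersingular.} The quotient $\phi_i\to\phi_i/C$ is a separable cyclic $\qq$-isogeny. Any Drinfeld module isogenous to a supersingular one is supersingular: an isogeny $u\colon\phi\to\psi$ with dual $u^\vee$ satisfies $u^\vee u=\phi_a$ for some $a$. Hence every factor in the product has the form $Y-j_j$ for some $j$.

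\textbf{Counting multiplicities.} The exponent of $(Y-j_j)$ equals the number of cyclic subgroups $C\subseteq\phi_i[\qq]$ with $\phi_i/C\cong\phi_j$. By the remark after \Cref{defn:brandt-matrix} and the proof of \Cref{lem:brandt-row-sum-duality}, this number is $b_{i,j}(\qq)$. To see this directly, isogenies $u\colon\phi_i\to\phi_j$ with $\nn(u)=\qq$ are separable, since $\qq\neq\pp$, and have kernel $C\cong A/\qq$. Two such isogenies have the same kernel if and only if they differ by postcomposition with an element of $\aut(\phi_j)$, and this group acts freely. So each kernel accounts for exactly $\#\aut(\phi_j)$ isogenies, and dividing by $\#\aut(\phi_j)$ gives $b_{i,j}(\qq)$.

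Combining the three steps yields the factorization. The exponents sum to $Q+1$, consistent with $\deg_Y\Phi_\qq=Q+1$ and \Cref{lem:brandt-row-sum-duality}. Distinct $j$-invariants give distinct linear factors because $S_\pp$ is squarefree, so the multiplicities are well defined.

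The main obstacle is the first step: showing that reducing $\Phi_\qq$ modulo $\pp$ commutes with evaluation at the supersingular $j$-invariants. This is delicate when $j_i=0$, where extra automorphisms appear, and it requires the definition of $\Phi_\qq$ to be valid at characteristic $\pp$ points. I would first try to argue via the moduli interpretation: the good reduction of $Y_0(\qq)$ at $\pp$ means the subgroup scheme of cyclic $\qq$-subgroups is finite étale of degree $Q+1$ over $A[1/\qq]$. The monic polynomial $\Phi_\qq(j,Y)$ is the norm of $Y-j'$ along this cover, so it specializes fiberwise.
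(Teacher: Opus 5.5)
Your proposal is correct and follows the paper's proof: you evaluate the defining product of $\Phi_\qq$ at $\phi_i$, note that every quotient $\phi_i/C$ is one of the supersingular $\phi_j$, and use the fact that isogenies with a fixed kernel form an $\aut(\phi_j)$-torsor to identify the multiplicity of $Y-j_j$ with $b_{i,j}(\qq)$. Your moduli-theoretic justification of the reduction step is more than the paper needs. Its definition of $\Phi_\qq$ is stated for all $A$-characteristics different from $\qq$, with coefficients acting through $\gamma$, so it already gives $\ov\Phi_\qq(j_i,Y)$ as the product over cyclic $C\subseteq\phi_i[\qq]$.
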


\begin{proof}
    Every quotient $\phi_i/C$ is isomorphic to some $\phi_j$. For a fixed $C$ with $\phi_i/C\cong\phi_j$, the isomorphisms $\phi_i/C\to\phi_j$ form a torsor under $\aut(\phi_j)$. Hence the normalization in the definition of $b_{i,j}(\qq)$ shows that $b_{i,j}(\qq)$ equals the number of cyclic submodules $C\subseteq\phi_i[\qq]$ for which $\phi_i/C\cong\phi_j$. Grouping the factors accordingly concludes the proof.
\end{proof}

We now obtain the main computable criterion.

\begin{theorem}[Polynomial criterion]
\label{thm:polynomial-criterion}
    The graph $\Gamma_\pp(\qq)$ is complete if and only if
    $$\ov\Phi_\qq(X,Y)\in\bigl(S_\pp(X),S_\pp(Y)\bigr)\subseteq\F_\pp[X,Y].$$
\end{theorem}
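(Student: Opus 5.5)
The plan is to turn ideal membership into a vanishing condition at the finitely many points $(j_i,j_k)$, and then read that vanishing off \Cref{prop:modular-brandt-factorization}.

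\textbf{Step 1: the ideal is the vanishing ideal of the grid.} Write $R\coloneq\F_\pp[X,Y]$ and $I\coloneq(S_\pp(X),S_\pp(Y))\subseteq R$. By the remark after \Cref{defn:supersingular-polynomial}, $S_\pp$ is squarefree. Since $\F_\pp$ is perfect, $S_\pp$ is then separable, so $\F_\pp[X]/(S_\pp)$ is a finite étale $\F_\pp$-algebra. Consequently
$$R/I\cong\F_\pp[X]/(S_\pp(X))\otimes_{\F_\pp}\F_\pp[Y]/(S_\pp(Y))$$
is étale, hence reduced. After base change to $\ov\F_\pp$, the Chinese remainder theorem in each variable gives an isomorphism
$$\ov\F_\pp[X,Y]/I\ov\F_\pp[X,Y]\cong\prod_{i,k}\ov\F_\pp,$$
namely evaluation at the $n^2$ points $(j_i,j_k)$. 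This uses that the $j_i$ are pairwise distinct.

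\textbf{Step 2: descent from $\ov\F_\pp$ to $\F_\pp$.} For $f\in R$ we need $f\in I$ if and only if $f\in I\ov\F_\pp[X,Y]$. This holds because $\ov\F_\pp$ is faithfully flat over $\F_\pp$: the map $R/I\to (R/I)\otimes_{\F_\pp}\ov\F_\pp$ is injective, as for any field extension. Combining with Step~1, for $f\in R$ we get
$$f\in I\iff f(j_i,j_k)=0\text{ for all }1\le i,k\le n.$$

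\textbf{Step 3: apply the factorization.} Take $f=\ov\Phi_\qq$. By \Cref{prop:modular-brandt-factorization},
$$\ov\Phi_\qq(j_i,j_k)=\prod_{m=1}^n(j_k-j_m)^{b_{i,m}(\qq)}.$$
Since the $j_m$ are distinct, this product vanishes exactly when the factor with $m=k$ appears with positive exponent, that is, when $b_{i,k}(\qq)>0$. Therefore $\ov\Phi_\qq\in I$ if and only if every entry of $B(\qq)$ is positive, including the diagonal entries. By the remark following \Cref{defn:brandt-matrix}, this is exactly completeness of $\Gamma_\pp(\qq)$ with self-loops.

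\textbf{Main obstacle.} The only real subtlety is Step~1: checking that $I$ is radical, so that vanishing at the grid points suffices for membership. This is where squarefreeness of $S_\pp$ is essential. I will handle it by the CRT decomposition over $\ov\F_\pp$ rather than by a Nullstellensatz argument. One alternative is a direct division argument: reduce $f$ modulo $S_\pp(X)$ and $S_\pp(Y)$ to get a polynomial of bidegree less than $(n,n)$ that vanishes on the $n\times n$ grid, and note that such a polynomial is zero by Lagrange interpolation in each variable.
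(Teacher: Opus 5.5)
Your proposal is correct and follows essentially the same route as the paper. The paper also uses \Cref{prop:modular-brandt-factorization} to reduce completeness to $\ov\Phi_\qq$ vanishing at every grid point $(j_i,j_k)$, then uses squarefreeness of $S_\pp$ to identify the quotient after base change to $\ov\F_\pp$ with functions on the $n^2$ points. Your explicit descent step, injectivity of $R/I\to(R/I)\otimes_{\F_\pp}\ov\F_\pp$, spells out a point the paper leaves implicit.
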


\begin{proof}
    By \Cref{prop:modular-brandt-factorization}, we have
    $b_{i,j}(\qq)>0$ if and only if $\ov\Phi_\qq(j_i,j_j)=0$. Thus, the graph is complete if and only if $\ov\Phi_\qq$ vanishes on every pair $(j_i,j_j)$.

    Since $S_\pp$ is squarefree, the quotient $$\ov\F_\pp[X,Y]/\bigl(S_\pp(X),S_\pp(Y)\bigr)$$ corresponds, after extending scalars to $\ov\F_\pp$, to the algebra of functions on the set $\{(j_i,j_j):1\leq i,j\leq n\}$; therefore, $\ov\Phi_\qq$ vanishes on every such pair if and only if its image in this quotient is zero.
\end{proof}

Equivalently, reduce $\ov\Phi_\qq(X,Y)$ modulo $S_\pp(X)$ and $S_\pp(Y)$. The graph $\Gamma_\pp(\qq)$ is complete precisely when the resulting bivariate remainder is zero. Thus, we obtain a finite computable algorithm (\Cref{app:completeness-algorithm}).

\section*{Acknowledgments}
This work was started at the 2026 Research Science Institute (RSI), organized by the Center for Excellence in Education and held at the Massachusetts Institute of Technology. The author thanks his mentor, Ayan Nath, for proposing the topic and for his guidance during the research. He is also grateful to Dr. Tanya Khovanova for her advice on the paper and presentation and for the helpful discussions, and to the program supervisors, Prof. David Jerison and Dr. Jonathan Bloom, for their feedback. He thanks AnaMar\'ia Perez for her comments and suggestions on the paper and presentation. The author is grateful to Prof. Bjorn Poonen and Dr. Alexander Petrov for discussions on the direction of this research and for their feedback and advice. He also thanks Dr. Jenny Sendova, Dr. Stanislav Atanasov, Prof. Scott Kominers, Anay Aggarwal, Jaeho Lee, Miroslav Marinov, Miles Edwards, Celine Zhang, Scarlet Gitelson, Timothy Chen, and Michael Luo for their comments and suggestions.

The author thanks MIT, CEE, Sts. Cyril \& Methodius International Foundation, the EVRIKA Foundation, Kaufland Bulgaria, and the Union of Bulgarian Mathematicians for their support. This research was partially supported by the Bulgarian National Program ``Education with Science.'' The author also acknowledges the Bulgarian High School Student Institute of Mathematics and Informatics (HSSIMI) and the High School Student Institute of the BAS (UchI-BAS). He thanks Anatoli Gruncharov for his support. Finally, the author is grateful to his family.

\bibliographystyle{plain}
\bibliography{refs}

\clearpage
\appendix
\section{Algorithm for computing the completeness number}
\label[appendix]{app:completeness-algorithm}

Let $U(\pp)$ denote the upper bound for $E(\pp)$ established in
\Cref{thm:upper-bound-intro}. The polynomial criterion yields the following
algorithm.

\begin{algorithm}
\caption{Computation of $E(\pp)$}
\label{alg:completeness-number}
\begin{algorithmic}[1]
    \Require A prime $\pp\subset A$
    \Ensure The completeness number $E(\pp)$
    \State Compute the supersingular polynomial $S_\pp(X)$
    \State $e\gets0$
    \ForAll{primes $\qq\neq\pp$ with $\deg\qq<U(\pp)$}
        \State Compute the modular $\qq$-polynomial $\Phi_\qq(X,Y)$
        \State Reduce $\Phi_\qq$ modulo $\pp$ to obtain
        $\ov\Phi_\qq(X,Y)$
        \State Compute $R_{\pp,\qq}(X,Y)\gets\ov\Phi_\qq(X,Y)\bmod\bigl(S_\pp(X),S_\pp(Y)\bigr)$
        \If{$R_{\pp,\qq}(X,Y)\neq0$}
            \State $e\gets\max\{e,\deg\qq\}$
        \EndIf
    \EndFor
    \State \Return $e+1$
\end{algorithmic}
\end{algorithm}

\begin{prop}
    \Cref{alg:completeness-number} terminates and returns $E(\pp)$.
\end{prop}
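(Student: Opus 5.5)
The plan is to prove termination and correctness separately. For termination, I would first note that each step is a finite computation. The supersingular polynomial $S_\pp$ has degree $n<\infty$ and is computable, for instance via the Hasse-invariant description or the known explicit formulas cited in \Cref{sec:modular-polynomials}. There are finitely many primes $\qq$ of $A$ with $\deg\qq<U(\pp)$, since there are only $q^m$ monic polynomials of each degree $m$ and $U(\pp)\leq 2d+2$ by \Cref{thm:upper-bound-intro}. For each such $\qq$, the modular polynomial $\Phi_\qq\in A[X,Y]$ is computable by the algorithms of \cite{BREUER200959,caranay-computing-mod-polynomials}. Reduction modulo $\pp$ and reduction modulo the ideal $(S_\pp(X),S_\pp(Y))$ are finite divisions, since $S_\pp(X)$ and $S_\pp(Y)$ are monic in separate variables and so form a Gröbner basis. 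Hence the loop runs finitely many times and the algorithm terminates.

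For correctness, I would use \Cref{thm:polynomial-criterion}: for each prime $\qq\neq\pp$, the graph $\Gamma_\pp(\qq)$ is complete, that is, $B(\qq)$ has no zero entries, if and only if $R_{\pp,\qq}=0$. Since $S_\pp(X),S_\pp(Y)$ is a Gröbner basis, the remainder is zero exactly when $\ov\Phi_\qq$ lies in the ideal. So when the loop ends, $e$ is the maximum degree of a prime $\qq\neq\pp$ with $\deg\qq<U(\pp)$ for which $\Gamma_\pp(\qq)$ is incomplete, and $e=0$ if there is none.

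It remains to show that $E(\pp)=e+1$. By \Cref{defn:completeness-number}, $E(\pp)-1$ is the largest degree of a prime $\qq\neq\pp$ with $B(\qq)$ having a zero entry, or $0$ if there is none. This uses $E(\pp)\geq1$ and the fact that the defining set is upward closed in $e$. By \Cref{thm:upper-bound-intro}, every prime $\qq\neq\pp$ with $\deg\qq\geq U(\pp)$ gives a complete graph, so every incomplete $\qq$ has degree less than $U(\pp)$ and is examined by the loop. Thus the maximum over all incomplete $\qq$ equals the maximum over those examined, which gives $E(\pp)=e+1$. In the cases $d\in\{1,2\}$, we have $U(\pp)=1$, so the loop is empty and the output $1$ agrees with $E(\pp)=1$.

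The main point needing care is the step that identifies $E(\pp)$ with one plus the maximal degree of an incomplete $\qq$; it requires the infimum in \Cref{defn:completeness-number} to be attained and to equal exactly that number. To see this, note that $e'$ belongs to the defining set if and only if $e'$ exceeds every degree of an incomplete $\qq$. That set is therefore $\{e'\geq \max(\text{bad degrees})+1\}$, intersected with $\Z_{\geq1}$, and its minimum is as claimed.
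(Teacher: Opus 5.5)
Your proposal is correct and follows the paper's proof. Both arguments rest on the same three points: the loop is finite; the polynomial criterion (\Cref{thm:polynomial-criterion}) detects exactly the incomplete graphs; and \Cref{thm:upper-bound-intro} forces every incomplete $\qq$ to satisfy $\deg\qq<U(\pp)$, so the output $e+1$ equals $E(\pp)$. Your additional remarks fill in details the paper leaves implicit: the computability of each step, the Gröbner-basis justification that a zero remainder means ideal membership, and the explicit identification of $E(\pp)$ with one plus the largest bad degree.
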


\begin{proof}
There are finitely many primes $\qq$ with $\deg\qq<U(\pp)$. By \Cref{thm:polynomial-criterion}, the remainder $R_{\pp,\qq}$ is nonzero if and only if $\Gamma_\pp(\qq)$ is not complete. Moreover, \Cref{thm:upper-bound-intro} implies that $\Gamma_\pp(\qq)$ is complete whenever $\deg\qq\geq U(\pp)$. Hence $e=\max\{\deg\qq:\Gamma_\pp(\qq)\text{ is not complete}\}$, with $e=0$ if this set is empty. Therefore, the algorithm returns $e+1=E(\pp)$.
\end{proof}

Table~1 gives the data obtained by running the algorithm for several
values of $q$ and $\deg\pp$. In the distribution column, $a\,(m)$ means that exactly $m$ characteristic primes $\pp$ have $E(\pp)=a$.


\begingroup
\centering
\refstepcounter{table}
\label{tab:computed-values}
\textup{Table~\thetable: Exact $E(\pp)$ distributions for $\deg\pp\geq3$.}
\par\smallskip

\small
\setlength{\tabcolsep}{5pt}

\begin{minipage}[t]{0.49\textwidth}
\vspace{0pt}
\centering
\begin{tabular}{@{}c c r l c@{}}
\hline
$q$ & $\deg\pp$ & $\#\pp$ & $E(\pp)$ distribution & $U(\pp)$\\
\hline
$2$ & $3$ & $2$  & $6\ (2)$              & $8$\\
$2$ & $4$ & $3$  & $3\ (1),\ 5\ (2)$     & $6$\\
$2$ & $5$ & $6$  & $10\ (6)$             & $12$\\
$2$ & $6$ & $9$  & $8\ (4),\ 9\ (5)$     & $11$\\
$2$ & $7$ & $18$ & $12\ (16),\ 14\ (2)$  & $16$\\
$2$ & $8$ & $30$ & $11\ (20),\ 12\ (10)$ & $15$\\
$2$ & $9$ & $56$ & $14\ (2),\ 16\ (54)$  & $20$\\
\hline
$3$ & $3$ & $8$  & $4\ (8)$              & $6$\\
$3$ & $4$ & $18$ & $4\ (9),\ 5\ (9)$     & $6$\\
$3$ & $5$ & $48$ & $8\ (48)$             & $10$\\
\hline
\end{tabular}
\end{minipage}
\hfill
\begin{minipage}[t]{0.49\textwidth}
\vspace{0pt}
\centering
\begin{tabular}{@{}c c r l c@{}}
\hline
$q$ & $\deg\pp$ & $\#\pp$ & $E(\pp)$ distribution & $U(\pp)$\\
\hline
$4$  & $3$ & $20$  & $4\ (8),\ 6\ (12)$    & $6$\\
$4$  & $4$ & $60$  & $4\ (12),\ 5\ (48)$   & $5$\\
\hline
$5$  & $3$ & $40$  & $4\ (40)$             & $6$\\
$5$  & $4$ & $150$ & $4\ (130),\ 5\ (20)$  & $5$\\
\hline
$7$  & $3$ & $112$ & $4\ (112)$            & $5$\\
$8$  & $3$ & $168$ & $4\ (168)$            & $5$\\
$11$ & $3$ & $440$ & $4\ (440)$            & $5$\\
\hline
\end{tabular}
\end{minipage}
\par
\endgroup

\subsection*{Data and code}
The implementation of \Cref{alg:completeness-number} is publicly available on GitHub at \url{https://github.com/nikolaveselinov/completeness-number}. Code written with the assistance of Generative AI (Codex 5.6 Sol Ultra, July 2026).

\smallskip
\end{document}